%
%

\documentclass[
11pt,                          
final,                         
english                        
]{article}

%
%

\usepackage{amsmath}           
\usepackage{amssymb}           
\usepackage[utf8]{inputenc}    
\usepackage[T1]{fontenc}       
\usepackage[a4paper]{geometry} 
\usepackage{setspace}          
\usepackage{bbm}               
\usepackage{mathrsfs}          
\usepackage{mathtools}         
\usepackage{stmaryrd}          
\usepackage[thmmarks,hyperref]{ntheorem} 
\usepackage{xspace}            
\usepackage[normalem]{ulem}    
\usepackage[activate={true,nocompatibility},
            final,tracking=true,kerning=true,
            spacing=true,factor=1100,
            stretch=10,shrink=10,expansion=false
           ]{microtype}
\microtypecontext{spacing=nonfrench}  
\usepackage[hyperref]{xcolor}
\definecolor{mydarkblue}{RGB}{0,0,155}
\usepackage[backref=page,      
           final=true,         
           colorlinks=true,    
           allcolors=mydarkblue,  
           hypertexnames=false,
           plainpages=false,              
           pdfpagelabels=true,            
           pdfencoding=auto,              
           unicode=true                   
                      ]{hyperref}         
\usepackage[shortcuts]{extdash}  

%
%
\geometry{bindingoffset=0cm}
\geometry{hcentering=true}
\geometry{hscale=0.80}
\geometry{vscale=0.81}

%
%
\author{
  \textbf{Matthias Schötz}\thanks{Former Boursier de l'ULB.
  This work was supported by the Fonds de la Recherche Scientifique (FNRS) and the Fonds Wetenschappelijk
  Onderzoek - Vlaaderen (FWO) under EOS Project n$^0$30950721.
  Current Adresse: Universität Leipzig, \href{mailto:Matthias.Schoetz@math.uni-leipzig.de}{\texttt{Matthias.Schoetz@math.uni-leipzig.de}}.}\\
  Département de mathématiques\\
  Université libre de Bruxelles
}

%
%

\newcommand{\refitem}[1] {\textit{\ref{#1}.)}}

%
%
\numberwithin{equation}{section}

%
%

%
%
\theoremheaderfont{\normalfont\bfseries}
\theorembodyfont{\itshape}
\newtheorem{lemma}{Lemma}[section]

\newtheorem{proposition}[lemma]{Proposition}
\newtheorem{theorem}[lemma]{Theorem}

\newtheorem{corollary}[lemma]{Corollary}
\newtheorem{definition}[lemma]{Definition}
\theorembodyfont{\rmfamily}
\newtheorem{example}[lemma]{Example}
%
%
\theoremheaderfont{\normalfont\bfseries}
\theorembodyfont{\normalfont}
\theoremstyle{nonumberplain}
\theoremseparator{:}
\theoremsymbol{\hbox{$\boxempty$}}
\newtheorem{proof}{Proof}

\newcommand{\Unit}           {\mathbbm{1}}
\newcommand{\argument}       {\ignorespaces{\,\cdot\,}\ignorespaces}
\DeclarePairedDelimiter{\abs}{\lvert}{\rvert}
\DeclarePairedDelimiter{\norm}{\lVert}{\rVert}
\newcommand{\cc}[1]          {\overline{{#1}}}
\newcommand{\I}              {\mathrm{i}}
\newcommand{\NN}{\mathbbm{N}}
\newcommand{\RR}{\mathbbm{R}}
\newcommand{\CC}{\mathbbm{C}}
\newcommand{\FF}{\mathbbm{F}}
\DeclarePairedDelimiter{\ordinaryIP}{\langle}{\rangle}
\newcommand{\skal}[3][]{\ordinaryIP[#1]{\,#2 \,#1|\, #3\,}}
\newcommand{\dupr}[3][]{\ordinaryIP[#1]{\,#2 \,,\, #3\,}}
\DeclarePairedDelimiter{\ordinarySet}{\{}{\}}
\newcommand{\set}[3][]{\ordinarySet[#1]{\,#2 \;#1|\; #3\,}}
\newcommand{\seminorm}[3][]{\norm[#1]{#3}_{#2}}
\newcommand{\Lin}{\mathcal{L}}
\newcommand{\Adbar}{\mathcal{L}^*}
\newcommand{\States}{\mathcal{S}}
\newcommand{\PureStates}{\mathcal{S}_{\mathrm{p}}}
\newcommand{\MultStates}{\mathcal{S}_{\mathrm{m}}}
\DeclareFontFamily{U}{FdSymbolF}{}
\DeclareFontShape{U}{FdSymbolF}{m}{n}{
    <-7.1> FdSymbolF-Book
    <7.1-> FdSymbolF-Book
}{}
\DeclareSymbolFont{delimiters}{U}{FdSymbolF}{m}{n}
\DeclareMathDelimiter{\llangle}{\mathopen}{delimiters}{"92}{delimiters}{"92}
\DeclareMathDelimiter{\rrangle}{\mathclose}{delimiters}{"98}{delimiters}{"98}
\DeclarePairedDelimiter{\coolIP}{\llangle}{\rrangle}
\newcommand{\clconvHull}[2][]{\coolIP[#1]{#2}_{\mathrm{cl\textup{-}conv}}}
\newcommand{\filterHull}[2][]{\coolIP[#1]{#2}_{\mathrm{fltr}}}
\newcommand{\ex}{\mathrm{ex}}
\newcommand{\Hermitian}{\mathrm{H}}
\newcommand{\Dom}{\mathcal{D}}
\newcommand{\Var}{\mathrm{Var}}

%
%
\title{%
Gelfand--Naimark Theorems for Ordered \texorpdfstring{$^*$-Algebras}{*-Algebras}%
}

\date{March 2022}

%
%
\begin{document}
\begin{onehalfspace}

\maketitle

\begin{abstract}
  The Gelfand--Naimark theorems provide important insight into the structure of
  general and of commutative $C^*$\=/algebras. It is shown that these can be generalized
  to certain ordered $^*$\=/algebras.
  More precisely, for $\sigma$\=/bounded closed ordered $^*$\=/algebras a faithful representation as operators is constructed.
  Similarly, for commutative such algebras, a faithful representation as complex-valued functions is constructed if an additional necessary regularity condition is fulfilled.
  These results generalize the Gelfand--Naimark representation theorems to classes of $^*$\=/algebras larger than $C^*$\=/algebras, and which especially
  contain $^*$\=/algebras of unbounded operators.
  The key to these representation theorems is a new result for Archimedean ordered vector spaces $V$:
  If $V$ is $\sigma$\=/bounded, then the order of $V$ is induced by the extremal positive linear functionals on $V$.
\end{abstract}

\section{Introduction}
\label{sec:Introduction}
A $^*$\=/algebra is a unital associative algebra $\mathcal{A}$ over the field of complex numbers
that is endowed with an antilinear involution $\argument^*$ of $\mathcal{A}$ fulfilling $(ab)^* = b^*a^*$
for all $a,b\in \mathcal{A}$. Note that $^*$\=/algebras will always be assumed to have a
unit which is denoted by $\Unit$.
A $C^*$\=/algebras is a $^*$\=/algebra that is complete
with respect to a norm $\seminorm{}{\argument}$ on $\mathcal{A}$ that fulfils
$\seminorm{}{ab} \le \seminorm{}{a} \seminorm{}{b}$ for all $a,b\in \mathcal{A}$
as well as $\seminorm{}{a^*a} = \seminorm{}{a}^2$ for all $a\in\mathcal{A}$.

The Gelfand--Naimark representation theorems \cite[Thm.~1 and Lemma~1]{gelfand.naimark:ImbeddingofNormedRingsIntoRingOfOperators}
are cornerstones of the theory of $C^*$\=/algebras and -- together with the well-behaved spectral theory -- make $C^*$\=/algebras important tools in mathematical physics.
In their simplest form, these two theorems state that all $C^*$\=/algebras
have a faithful representation as $^*$\=/algebras of bounded operators on a Hilbert
space, and that all commutative $C^*$\=/algebras have a faithful representation
as $^*$\=/algebras of bounded complex-valued functions. This allows to interpret $C^*$\=/algebras
as algebras of observables of physical systems: of quantum systems in general, and
in the commutative case of classical systems. From this point of view, the perplexing
differences between the description of quantum systems by means of operators on a
Hilbert space, and of classical systems by functions on a smooth manifold, are just
artefacts of the choice of two different ways to represent the observable algebras.
Consequently, the problem of quantization, i.e.\ of finding a somehow suitable quantum
system to a given classical one, can be formulated in a mathematical precise way,
e.g.\ as finding deformations of commutative $C^*$\=/algebras to non-commutative ones
in the sense of \cite{rieffel:DeformationQuantizationOfHeisenbergManifolds}.

However, the restriction of the Gelfand--Naimark theorems to $C^*$\=/algebras is unfortunate.
While there obviously exist many interesting examples of $^*$\=/algebras of functions or
operators that are not $C^*$\=/algebras, and for which an abstract description might
be desireable, the main motivation for generalizing the Gelfand--Naimark theorems
might again come from physics. Indeed, it is well-known that some of the most basic $^*$\=/algebras
of observables that a (physics-)student gets to know in a course on quantum mechanics are
far away from being $C^*$\=/algebras: If $\mathcal{A}$ is a complex associative algebra
with unit $\Unit$ and $P,Q\in\mathcal{A}$ fulfil the canonical commutation relation
$[P,Q] \coloneqq PQ-QP = \lambda \Unit$ with $\lambda \in \CC\setminus \{0\}$,
then the $n$-fold commutator of $P$ with $Q^n$
fulfils the identity $[P,[P,\dots[P,Q^n]\dots]] = n! \lambda^n \Unit$ for all $n\in \NN$. Thus
there cannot exist a non-trivial submultiplicative seminorm $\seminorm{}{\argument}$
on $\mathcal{A}$, because submultiplicativity would imply at most exponential growth
with $n$ of $\seminorm[\big]{}{ [P,[P,\dots[P,Q^n]\dots]] }$. This rules out any possibility
to embed $\mathcal{A}$ in a $C^*$\=/algebra, and also in many weaker types of
topological $^*$\=/algebras like pro-$C^*$\=/algebras, for which one can prove rather
direct generalizations of the Gelfand--Naimark theorems. Note that faithful representations
of $^*$\=/algebras of canonical commutation relations are well-known and can be given e.g.\ by differential operators. The
problem thus is not to find faithful representations, but to find a sufficiently large
class of $^*$\=/algebras for which the existence of such faithful representations
can be proven by general arguments.

This note gives a solution by focusing not so much on topological properties, but on
order properties of $^*$\=/algebras. This is motivated by \cite{cimpric:representationTheoremForArchimedeanQuadraticModules},
where it was shown that a suitable order on the Hermitian elements of a $^*$\=/algebra $\mathcal{A}$
allows to construct a $C^*$\=/seminorm on the ``bounded'' elements of $\mathcal{A}$, and by \cite{schoetz:PreprintUniversalContinuousCalculusForSuStarAlgebras},
where the continuous calculus and the spectral theory of $C^*$\=/algebras has been extended to certain ordered $^*$\=/algebras.
In the next Section~\ref{sec:pre}, some basic
definitions and results, mainly from locally convex analysis, are recapitulated.
The general idea then is to roughly follow the classical approach from
\cite{kadison:RepresentationTheoremForCommutativeTopologicalAlgebras}:
Section~\ref{sec:order} develops the main result for ordered vector spaces,
namely Theorem~\ref{theorem:orderedvs} which guarantees that on $\sigma$\=/bounded Archimedean ordered vector spaces
there exist many (extremal) positive linear functionals, and which considerably generalizes the result from
\cite{kadison:RepresentationTheoremForCommutativeTopologicalAlgebras}.
Here, an ordered vector space is called ``$\sigma$\=/bounded'' essentially if it contains an increasing sequence of positive elements that
eventually becomes greater than every fixed element. 
Theorem~\ref{theorem:asoperators} shows that every $\sigma$\=/bounded closed ordered $^*$\=/algebra can be represented faithfully
as operators on a (pre-)Hilbert space, and Theorem~\ref{theorem:asfunctions}
shows that in the commutative case, such algebras also admit a faithful representation
as functions if an additional regularity condition (which is clearly necessary) is fulfilled.

Acknowledgements: I would like to thank Profs.\ K.\ Schmüdgen and G.\ Buskes for some helpful discussions and comments on earlier versions of the manuscript.
\section{Preliminaries} \label{sec:pre}
The natural numbers are $\NN \coloneqq \{1,2,3,\dots\}$ and $\NN_0 \coloneqq \NN \cup \{0\}$,
the fields of real and complex numbers are denoted by $\RR$ and $\CC$, respectively.
If $X$ and $Y$ are partially ordered sets (i.e.\ sets together with a reflexive,
transitive and antisymmetric relation $\le$), then a map $\Phi \colon X \to Y$
is called \emph{increasing} if $\Phi(x) \le \Phi(x')$ holds for all $x,x'\in X$
with $x\le x'$. It is called an \emph{order embedding} if it is increasing, injective,
and if additionally $x\le x'$ holds for all $x,x'\in X$ for which $\Phi(x) \le \Phi(x')$.
A partially ordered set $X$ is called \emph{directed} if for all
$x,x' \in X$ there exists a $y\in X$ such that $x\le y$ and $x'\le y$.
Similarly, a subset $S$ of a partially ordered set $X$ is called directed if it is
directed with respect to the order inherited from $X$.
For two vector spaces $V$ and $W$ over the same field of scalars $\FF$, the vector space
of all linear maps from $V$ to $W$ is denoted by $\Lin(V,W)$. In the special case
that $W = \FF$ we write $V^* \coloneqq \Lin(V,\FF)$, the elements of $V^*$ are called
\emph{linear functionals} on $V$, and the evaluation of a linear functional $\omega \in V^*$ on
a vector $v\in V$ is denoted by means of the bilinear \emph{dual pairing}
$\dupr{\argument}{\argument} \colon V^*\times V \to \FF$, $(\omega,v) \mapsto \dupr{\omega}{v}$.

The main technical tools needed in the following are some basic theorems from locally
convex analysis. A \emph{filter} on a set $X$ is a non-empty set $\mathcal{F}$ of
subsets of $X$ with the following two properties:
\begin{itemize}
  \item If $S,T \in \mathcal{F}$, then $S\cap T \in \mathcal{F}$.
  \item If $S \in \mathcal{F}$ and if a subset $T$ of $X$ fulfils $T\supseteq S$, then $T\in\mathcal{F}$.
\end{itemize}
Similarly, a \emph{basis of a filter} on $X$ is a non-empty set $\mathcal{E}$ of subsets
of $X$ such that for all $S,T\in\mathcal{E}$ there exists an $R\in\mathcal{E}$
with $R\subseteq S\cap T$. In this case,
\begin{align}
  \label{eq:fltrgen}
  \filterHull{ \mathcal{E} } \coloneqq \set[\big]{T \subseteq X}{\exists_{S\in \mathcal{E}}: S\subseteq T}
\end{align}
is a filter on $X$, called \emph{the filter generated by $\mathcal{E}$}.
A (real) \emph{topological vector space} is a real vector space $V$ endowed with a
(not necessarily Hausdorff)
topology under which addition $V\times V \to V$ as well as scalar multiplication
$\RR \times V \to V$ are continuous. Then it follows from the continuity of addition
that a subset $U$ of $V$ is a neighbourhood of a vector $v\in V$ if and only if
$U-v \coloneqq \set[\big]{u-v}{u\in U}$ is a neighbourhood of $0$, so the topology
of $V$ is completely described by the $0$-neighbourhoods. The set $\mathcal{N}_0$
of all $0$-neighbourhoods of $V$ is a filter on $V$ and $\mathcal{N}_{0,\mathrm{conv}}$,
the set of all convex $0$-neighbourhoods of $V$, is a basis of a filter on $V$.
In general, $\filterHull{\mathcal{N}_{0,\mathrm{conv}}} \subseteq \mathcal{N}_0$
holds, and $V$ is called a (real) \emph{locally convex vector space} if even
$\filterHull{\mathcal{N}_{0,\mathrm{conv}}} = \mathcal{N}_0$, i.e.\ if the
filter of $0$-neighbourhoods of $V$ is generated by the convex $0$-neighbourhoods.
A \emph{locally convex topology} on $V$ is a topology with which $V$ becomes a locally convex vector space.

There is an easy procedure to construct locally convex topologies on a real vector space $V$:
A subset $S$ of $V$ is called \emph{absorbing} if for every
$v\in V$ there is a $\lambda \in {]0,\infty[}$ such that $\lambda v \in S$, and
it is called \emph{balanced} if $\lambda s \in S$ for all $s\in S$ and all $\lambda \in {[-1,1]}$.
If $\mathcal{N}_{0,\mathrm{abc}}$ is a basis of a filter on $V$ consisting of only
absorbing balanced convex subsets of $V$, then its generated filter
$\filterHull{\mathcal{N}_{0,\mathrm{abc}}}$ is the filter of $0$-neighbourhoods
of a locally convex topology on $V$.
Note that every absorbing balanced convex subset $S$ of $V$ yields a seminorm
$\seminorm{S}{\argument}$ on $V$ by setting
$\seminorm{S}{v} \coloneqq \inf \set[\big]{\lambda \in {]0,\infty[}}{\lambda^{-1} v \in S}$
for all $v\in V$, which is the \emph{Minkowski functional} of $S$.
Conversely, for every seminorm $\seminorm{}{\argument}$ on $V$,
the unit ball $B_{\seminorm{}{\argument}} \coloneqq \set[\big]{v\in V}{\seminorm{}{v} \le 1}$
is an absorbing balanced convex subset of $V$ whose Minkowski functional is again
the original seminorm $\seminorm{}{\argument}$. However, the map from absorbing
balanced convex subsets of $V$ to seminorms on $V$ is only surjective, but not
injective in general. Because of this, there is more freedom in describing locally
convex vector spaces via a basis of the filter of $0$-neighbourhoods than via the
corresponding seminorms.

An important example of Hausdorff locally convex vector spaces is the dual space $V^*$ of an arbitrary
real vector space $V$ endowed with the \emph{weak\=/$^*$-topology}, i.e.\ with
the weakest topology on $V^*$ under which all the evaluation maps
$V^* \ni \omega \mapsto \dupr{\omega}{v} \in \RR$ with $v\in V$ are continuous.
This is the locally convex topology whose filter of $0$-neighbourhoods is generated
by the intersections of unit balls of finitely many seminorms of the form
$V^* \ni \omega \mapsto \abs[\big]{\dupr{\omega}{v}} \in {[0,\infty[}$ with $v\in V$.

The following classical theorems will be crucial for the various representation
theorems. As usual for such results, their proofs typically make use of
the axiom of choice:
\begin{theorem}[Hahn-Banach]
  Let $V$ be a (real) locally convex vector space, $C$ a closed and convex subset of $V$
  and $v \in V\setminus C$. Then there exists a continuous linear functional
  $\omega$ on $V$ such that the inequality $\dupr{\omega}{c} \ge 1+\dupr{\omega}{v}$ holds
  for all $c\in C$.
\end{theorem}
\begin{theorem}[Banach-Alaoglu]
  Let $V$ be a real vector space and $U$ an absorbing subset of $V$, then
  \begin{align}
    U^\circ \coloneqq \set[\big]{\omega \in V^*}{\forall_{u\in U} : \abs{\dupr{\omega}{u}} \le 1}
    \label{eq:BanachAlaoglu}
  \end{align}
  is a convex subset of $V^*$ and compact in the weak-$^*$-topology.
\end{theorem}
If $C$ is a convex subset of a real vector space $V$, then an \emph{extreme point}
of $C$ is an element $e\in C$ with the following property: Whenever $e = \lambda c_1 + (1-\lambda) c_2$
holds for some $c_1,c_2\in C$ and $\lambda \in {]0,1[}$, then $e=c_1 = c_2$. The
set of all extreme points of $C$ will be denoted by $\ex(C)$.
Moreover, if $V$ is a locally convex vector space and $S$ a non-empty subset of $V$, then
$\clconvHull{S}$ will denote the \emph{closed convex hull} of $S$, i.e.\ the closure of
the convex hull of $S$, or equivalently, the intersection of all closed convex subsets of $V$
which contain $S$. In the special case that $V$ is a real vector space and $S$
a non-empty subset of $V^*$, then $\clconvHull{S}$ is understood to be the
closed convex hull of $S$ with respect to the weak-$^*$-topology and
\begin{align}
  \clconvHull{S}
  =
  \set[\Big]{
    \omega \in V^*
  }{
    \forall_{v\in V}: \dupr{\omega}{v} \in \clconvHull{\set[\big]{\dupr{\rho}{v}}{\rho \in S}}
  }
  \label{eq:bipolar}
\end{align}
holds.
This can be seen either by elementary linear algebra or as a consequence of the Hahn-Banach
Theorem and the fact that all weak-$^*$-continuous linear functionals on $V^*$ can be
expressed as maps $V^* \ni \omega \mapsto \dupr{\omega}{v} \in \RR$ with a suitable vector $v\in V$.
\begin{theorem}[Krein-Milman]
  Let $V$ be a real vector space and $K$ a weak\=/$^*$-compact and convex
  subset of $V^*$, then $K = \clconvHull{\ex(K)}$.
\end{theorem}

The tools needed to prove the generalized Gelfand--Naimark theorems are essentially
results about the existence of many (extremal) positive linear functionals
on ordered vector spaces:

An \emph{ordered vector space} is a real vector space $V$ endowed with a partial
order $\le$ such that the two conditions
\begin{align*}
  u+v\le u+w
  \quad\quad\text{and}\quad\quad
  \lambda v\le \lambda w
\end{align*}
hold for all $u,v,w\in V$ with $v\le w$ and all $\lambda \in {[0,\infty[}$.
In this case, $V^+ \coloneqq \set{v\in V}{0\le v}$ is the set of \emph{positive elements}
of $V$, which uniquely determines the order on $V$ because $v \le w$ is equivalent to $w-v\in V^+$
for all $v,w\in V$.
An ordered vector space $V$ is called \emph{Archimedean} if it has the
following property:
Whenever $v\le \epsilon w$ holds for two vectors $v\in V$, $w\in V^+$
and all $\epsilon \in {]0,\infty[}$, then $v\le 0$.
Note also that an ordered vector space $V$ is directed if and only if every $v\in V$
can be decomposed as $v = v_{(+)} - v_{(-)}$ with $v_{(+)}, v_{(-)} \in V^+$. Of course,
such a decomposition is not uniquely determined in general.

If $V$ and $W$ are both ordered vector spaces, then a linear map
$\Phi \colon V\to W$ is increasing if and only if $\Phi(v) \in W^+$ for all $v\in V^+$
and we write
$\Lin(V,W)^+ \coloneqq \set[\big]{\Phi\in\Lin(V,W)}{\Phi\textup{ is increasing}}$.
If $V$ is directed, then there actually exists
a (unique) partial order on $\Lin(V,W)$ such that $\Lin(V,W)$ becomes an ordered
vector space whose positive elements are precisely the increasing linear functions.
However, for simplicity, a linear map $\Phi \colon V \to W$ will always be called \emph{positive} if it
is increasing, even in the case that $V$ is not necessarily directed.
Note that a positive linear map $\Phi \colon V\to W$ is an order embedding
if and only if $\Phi(v) \in W\setminus W^+$ for all $v\in V \setminus V^+$.
The case that $W=\RR$ will be especially interesting:
Then the set of positive linear functionals is denoted by
$V^{*,+} \coloneqq \Lin(V,\RR)^+ = \set[\big]{\omega\in V^*}{\forall_{v\in V^+}:\dupr{\omega}{v}\ge 0}$.
If $V$ is directed, then $V^*$ is ordered vector space itself and a positive linear
functional $\omega$ on $V$ is said to be \emph{extremal}
if for every $\rho\in V^{*,+}$ with $\rho \le \omega$ there is a $\mu \in {[0,1]}$
such that $\rho = \mu \omega$. The set of all extremal positive
linear functionals on $V$ will then be denoted by $V^{*,+,\ex}$ and one can check that
$0\in V^{*,+,\ex}$ and $\lambda \omega \in V^{*,+,\ex}$ for all $\omega \in V^{*,+,\ex}$
and all $\lambda \in {]0,\infty[}$. Note that this definition of extremal positive linear
functionals only makes sense on a directed ordered vector space $V$ because it
refers to the partial order on $V^*$.
There is an extension theorem for (extremal) positive linear functionals from a
sufficiently large linear subspace of a directed ordered vector space to the whole space,
see \cite[Lemma~1.3.2]{schmuedgen:UnboundedOperatorAlgebraAndRepresentationTheory}
for details:
\begin{theorem}[Extension Theorem]
  Let $V$ be a directed ordered vector space and $S$ a linear subspace of $V$ with
  the property that for every $v\in V$ there exists an $s\in S$ such that $0\le s$ and
  $v\le s$. Then $S$ with the order inherited from $V$ is a directed
  ordered vector space and for every $\tilde{\omega} \in S^{*,+}$ there exists an
  $\omega \in V^{*,+}$ that extends $\tilde{\omega}$, i.e.\ that fulfils
  $\dupr{\omega}{s} = \dupr{\tilde{\omega}}{s}$ for all $s\in S$.
  Moreover, in the case that $\tilde{\omega} \in S^{*,+,\ex}$ there even exists
  an $\omega \in V^{*,+,\ex}$ that extends $\tilde{\omega}$.
\end{theorem}
The question of existence of many (extremal) positive linear functionals is
non-trivial in general. More precisely, one asks whether or not the following two
properties are fulfilled:
\begin{definition}
  Let $V$ be an ordered vector space, then we say that the order on $V$ is
  \emph{induced by its positive linear functionals} if for all
  $v \in V\setminus V^+$ there is an $\omega \in V^{*,+}$ such that $\dupr{\omega}{v} < 0$.
\end{definition}
\begin{definition}
 Let $V$ be a directed ordered vector space, then we say that the order on $V$ is
 \emph{induced by its extremal positive linear functionals} if for all
 $v \in V\setminus V^+$ there is an $\omega \in V^{*,+,\ex}$ such that
 $\dupr{\omega}{v} < 0$.
\end{definition}
These two conditions are equivalent to demanding that, for every $v\in V$, the inequality $0 \le v$
holds if and only if $0\le\dupr{\omega}{v}$ for all $\omega \in V^{*,+}$ or for
all $\omega \in V^{*,+, \ex}$, respectively.

We will see shortly that the above conditions of existence of ``many'' (extrmal) positive linear functionals is closely related to the properties of certain locally convex topologies:
Given two elements $\ell$ and $u$ of an ordered vector space $V$, then the \emph{order interval between $\ell$ and $u$} is
\begin{align}
  [\ell, u] \coloneqq \set[\big]{v\in V}{\ell \le v \le u}
  .
\end{align}
Moreover, a subsets $S$ of $V$ is called \emph{saturated} if $[\ell , u] \subseteq S$ is fulfilled for all $\ell, u\in S$ with $\ell \le u$.
For example, every order interval is saturated. It is not hard to see that the intersection of finitely many (even
arbitrarily many) saturated subsets of an ordered vector space $V$ is again
saturated. As a consequence, the set of all absorbing balanced convex and
saturated subsets of $V$ is a basis of the filter of $0$-neighbourhoods of
a locally convex topology:
\begin{definition}
  Let $V$ be an ordered vector space, then the \emph{normal topology}
  on $V$ is the locally convex topology whose filter of $0$-neighbourhoods is
  generated by the absorbing balanced convex and saturated subsets of $V$.
\end{definition}
This topology is not unknown in the theory of ordered vector spaces and $^*$\=/algebras.
For example, the normal topology is essentially what was referred to as $\tau_n$ in
\cite[Sec.~1.5]{schmuedgen:UnboundedOperatorAlgebraAndRepresentationTheory}.

\section{Existence of Extremal Positive Linear Functionals} \label{sec:order}
The next Lemma~\ref{lemma:HahnBanachApplication} is a standard application of the
Hahn-Banach Theorem, a proof is given for convenience of the reader:
\begin{lemma} \label{lemma:HahnBanachApplication}
  Let $V$ be an ordered vector space and assume that there is a locally convex
  topology $\tau$ on $V$ such that $V^+$ is closed with respect to $\tau$. Then
  for every $v\in V \setminus V^+$ there exists a positive linear functional
  $\omega$ on $V$ which fulfils $\dupr{\omega}{v}< 0$ and which is continuous
  with respect to $\tau$.
\end{lemma}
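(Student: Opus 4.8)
The plan is to apply the Hahn-Banach Theorem in the separating-hyperplane form stated above. Given $v \in V \setminus V^+$, I want to produce a positive linear functional $\omega$ with $\dupr{\omega}{v} < 0$ that is $\tau$-continuous. The natural setup is to take $C \coloneqq V^+$, which by assumption is closed with respect to $\tau$, and convex (since $V^+$ is a convex cone in any ordered vector space). Since $v \notin V^+ = C$, the Hahn-Banach Theorem yields a $\tau$-continuous linear functional $\omega$ such that
\begin{align*}
  \dupr{\omega}{c} \ge 1 + \dupr{\omega}{v}
  \qquad\text{for all } c \in C = V^+.
\end{align*}

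The key steps are then to extract positivity and the strict sign of $\dupr{\omega}{v}$ from this single inequality, exploiting that $V^+$ is a cone. First I would show positivity: for any fixed $c \in V^+$ and any $\lambda \in {]0,\infty[}$, the element $\lambda c$ again lies in $V^+$, so $\lambda \dupr{\omega}{c} = \dupr{\omega}{\lambda c} \ge 1 + \dupr{\omega}{v}$. If $\dupr{\omega}{c}$ were negative, letting $\lambda \to \infty$ would force the left-hand side to $-\infty$, contradicting the fixed lower bound; hence $\dupr{\omega}{c} \ge 0$ for all $c \in V^+$, i.e.\ $\omega \in V^{*,+}$. Second, applying the inequality to $c = 0 \in V^+$ gives $0 = \dupr{\omega}{0} \ge 1 + \dupr{\omega}{v}$, so $\dupr{\omega}{v} \le -1 < 0$, which is exactly the strict inequality required.

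I do not expect a serious obstacle here, since this is a textbook application of separation. The only point demanding a little care is the direction of the separating inequality as stated in this paper's formulation of Hahn-Banach, which places the functional on the closed convex set $C$ on the \emph{larger} side; one must check that this orientation is compatible with deducing $\omega \in V^{*,+}$ rather than $-\omega \in V^{*,+}$. The cone structure resolves this automatically: because $V^+$ is closed under multiplication by arbitrary positive scalars (and contains $0$), the inequality can only be consistent if $\omega$ is nonnegative on $V^+$, so no sign ambiguity survives. The $\tau$-continuity of $\omega$ is delivered directly by the theorem and needs no further work.
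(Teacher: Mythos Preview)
Your proposal is correct and follows essentially the same approach as the paper: apply the separating-hyperplane form of Hahn-Banach to $C = V^+$, use $0 \in V^+$ to get $\dupr{\omega}{v} \le -1$, and exploit the cone property $\lambda c \in V^+$ for all $\lambda > 0$ to force $\dupr{\omega}{c} \ge 0$. The only cosmetic difference is that the paper derives the contradiction for positivity by choosing a specific $\lambda$ that equalizes $\dupr{\omega}{\lambda c}$ with $\dupr{\omega}{v}$, whereas you let $\lambda \to \infty$; both arguments are equivalent uses of the cone structure.
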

\begin{proof}
  As $V^+$ is convex and closed with respect to $\tau$, the Hahn-Banach Theorem
  implies that for every $v\in V\setminus V^+$ there exists a linear functional $\omega \in V^*$
  which fulfils $\dupr{\omega}{c} \ge 1+ \dupr{\omega}{v}$ for all $c\in V^+$
  and which is continuous with respect to $\tau$. From $0\in V^+$
  it follows that $-1 \ge \dupr{\omega}{v}$. Moreover, $\dupr{\omega}{c} \ge 0$ holds
  for all $c\in V^+$, hence $\omega \in V^{*,+}$:
  Indeed, if there was some $c\in V^+$ with $\dupr{\omega}{c}<0$, then
  $\dupr{\omega}{\lambda c} = \dupr{\omega}{v}$ with $\lambda \coloneqq \dupr{\omega}{v} / \dupr{\omega}{c}$,
  which yields a contradiction because $\lambda \in {]0,\infty[}$ by construction, hence $\lambda c\in V^+$.
\end{proof}
\begin{proposition} \label{proposition:pos}
  Let $V$ be an ordered vector space, then the order on $V$ is induced by its
  positive linear functionals if and only if $V^+$ is closed in $V$ with respect
  to the normal topology.
\end{proposition}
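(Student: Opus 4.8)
The plan is to prove the two implications separately, the bridge between them being the observation that every positive linear functional on $V$ is automatically continuous with respect to the normal topology. The implication from right to left is then immediate: if $V^+$ is closed with respect to the normal topology, Lemma~\ref{lemma:HahnBanachApplication} applied with $\tau$ the normal topology yields, for every $v\in V\setminus V^+$, a positive linear functional $\omega \in V^{*,+}$ with $\dupr{\omega}{v} < 0$, which is precisely the statement that the order on $V$ is induced by its positive linear functionals.

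For the converse I would first establish the continuity claim. Given $\omega \in V^{*,+}$, consider the set $U_\omega \coloneqq \set{v\in V}{\abs{\dupr{\omega}{v}} \le 1}$. It is clearly absorbing, balanced and convex, and the crucial point is that it is saturated: if $\ell, u \in U_\omega$ satisfy $\ell \le u$ and $\ell \le v \le u$, then positivity of $\omega$ yields $-1 \le \dupr{\omega}{\ell} \le \dupr{\omega}{v} \le \dupr{\omega}{u} \le 1$, so $v\in U_\omega$. Hence $U_\omega$ is an absorbing balanced convex saturated set, i.e.\ a $0$-neighbourhood for the normal topology on which $\omega$ is bounded by $1$, so $\omega$ is continuous with respect to the normal topology.

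Assuming now that the order on $V$ is induced by its positive linear functionals, I would identify $V^+ = \bigcap_{\omega \in V^{*,+}} \set{v\in V}{\dupr{\omega}{v} \ge 0}$: the inclusion $\subseteq$ holds because each $\omega$ is positive, and the reverse inclusion is exactly the hypothesis in its equivalent reformulation (if $\dupr{\omega}{v}\ge 0$ for all $\omega\in V^{*,+}$, then $v\in V^+$). By the continuity just established, each set in this intersection is the preimage of the closed interval ${[0,\infty[}$ under a continuous functional, hence closed for the normal topology, and an arbitrary intersection of closed sets is closed; thus $V^+$ is closed. The main obstacle is verifying the saturatedness of $U_\omega$, which is the only place where positivity enters essentially and is precisely the feature linking the order structure to continuity for the normal topology.
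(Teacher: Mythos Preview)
Your proof is correct and rests on the same key observation as the paper: the set $U_\omega = \set{v\in V}{\abs{\dupr{\omega}{v}} \le 1}$ (the paper uses the open version $\set{v\in V}{\dupr{\omega}{v} \in {]{-1},1[}}$) is saturated precisely because $\omega$ is positive. The packaging differs slightly: the paper picks, for a given $v\in V\setminus V^+$, a single $\omega\in V^{*,+}$ with $\dupr{\omega}{v}=-1$ and directly exhibits $v+U$ as a normal-topology neighbourhood disjoint from $V^+$, whereas you first promote the saturatedness observation to the general statement ``every $\omega\in V^{*,+}$ is normal-continuous'' and then write $V^+$ as an intersection of closed half-spaces. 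Your route has the small bonus of making the continuity of positive functionals explicit, which is a fact of independent interest; the paper's route is one line shorter. Either way the substance is identical.
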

\begin{proof}
  If $V^+$ is closed in $V$ with respect to the normal topology,
  then the order on $V$ is induced by its positive linear functionals as an immediate
  consequence of the previous Lemma~\ref{lemma:HahnBanachApplication}.
  Conversely, assume that the order on $V$ is induced by its positive linear functionals
  and let $v\in V\setminus V^+$ be given. Then there exists an $\omega \in V^{*,+}$
  such that $\dupr{\omega}{v}=-1$ and $U\coloneqq \set[\big]{u\in V}{\dupr{\omega}{u} \in {]{-1},1[}}$
  is an absorbing balanced convex and saturated subset of $V$, so
  $v+U$ is a neighbourhood of $v$ with respect to the normal topology.
  Moreover, application of $\omega$ shows that $(v+U) \cap V^+ = \emptyset$ holds,
  so $V^+$ is closed.
\end{proof}
For finding a sufficient condition under which the order of a directed ordered vector space
is induced by its extremal positive linear functionals, it will be helpful to
be able to decompose a positive linear functional into extremal ones.
We follow essentially the argument of
\cite[Lemmas~12.4.3, 12.4.4]{schmuedgen:UnboundedOperatorAlgebraAndRepresentationTheory}:
\begin{proposition} \label{proposition:extremal}
  Let $V$ be a directed ordered vector space and $U$ an absorbing balanced and
  directed subset of $V$.
  Write $K$ for the set of all those $\omega \in V^{*,+}$ that
  fulfil $\dupr{\omega}{u} \le 1$ for all $u\in U$, then $K$ is weak\=/$^*$-compact and
  $K = \clconvHull{K\cap V^{*,+,\ex}}$.
\end{proposition}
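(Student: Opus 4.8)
The plan is to prove the two assertions in turn, first the weak-$^*$-compactness of $K$ and then the Krein-Milman-type identity $K = \clconvHull{K\cap V^{*,+,\ex}}$.

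First I would establish that $K$ is weak-$^*$-compact. Since $U$ is absorbing and balanced, the Banach-Alaoglu Theorem tells us that the polar
\begin{align*}
  U^\circ = \set[\big]{\omega \in V^*}{\forall_{u\in U} : \abs{\dupr{\omega}{u}} \le 1}
\end{align*}
is convex and weak-$^*$-compact. The set $K$ is the intersection $K = U^\circ \cap V^{*,+}$: on the positive cone the condition $\dupr{\omega}{u}\le 1$ for all $u\in U$ is equivalent to $\abs{\dupr{\omega}{u}}\le 1$ for all $u\in U$, because $U$ is balanced and so $-u\in U$ whenever $u\in U$, forcing $\abs{\dupr{\omega}{u}}\le 1$. (Here the directedness of $U$ is not yet needed.) Since $V^{*,+}$ is weak-$^*$-closed, being an intersection of half-spaces $\set{\omega\in V^*}{\dupr{\omega}{v}\ge 0}$ over $v\in V^+$, the set $K$ is a weak-$^*$-closed subset of the weak-$^*$-compact set $U^\circ$, hence weak-$^*$-compact. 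It is also convex as an intersection of convex sets.

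Next, since $K$ is weak-$^*$-compact and convex, the Krein-Milman Theorem gives $K = \clconvHull{\ex(K)}$. So the whole claim reduces to showing $\ex(K) \subseteq V^{*,+,\ex} \cup \{0\}$, i.e.\ that every nonzero extreme point of $K$ is an extremal positive linear functional; then $\ex(K) \subseteq K \cap V^{*,+,\ex}$ (using $0\in V^{*,+,\ex}$) and conversely $K\cap V^{*,+,\ex}\subseteq K$, so taking closed convex hulls yields the identity. I expect this step to be the main obstacle. Let $\omega\in\ex(K)$ be nonzero, and suppose $\rho\in V^{*,+}$ satisfies $\rho\le\omega$; I must produce $\mu\in[0,1]$ with $\rho = \mu\omega$. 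The idea is to show both $\rho$ and $\omega-\rho$ are scalar multiples of elements of $K$, then invoke extremality. The inequality $\rho\le\omega$ means $\omega-\rho\in V^{*,+}$ as well, so both $\rho$ and $\omega-\rho$ are positive. To control them on $U$, I would use that $U$ is directed: for $\omega\in K$ one has $\sup_{u\in U}\dupr{\omega}{u}\le 1$, and directedness of $U$ together with positivity lets me bound $\rho$ and $\omega-\rho$ on $U$ by their values relative to $\omega$. Concretely, set $t \coloneqq \sup_{u\in U}\dupr{\rho}{u}$; positivity and $\rho\le\omega$ give $0\le \dupr{\rho}{u}\le\dupr{\omega}{u}\le 1$ for the relevant $u$, so $t\in[0,1]$, and likewise $\sup_{u\in U}\dupr{\omega-\rho}{u}\le 1$.

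The finishing argument is a convex-decomposition: if $0<t<1$, write
\begin{align*}
  \omega = t\,\Big(\tfrac{1}{t}\rho\Big) + (1-t)\,\Big(\tfrac{1}{1-t}(\omega-\rho)\Big),
\end{align*}
and verify that $\tfrac{1}{t}\rho$ and $\tfrac{1}{1-t}(\omega-\rho)$ both lie in $K$ — here the directedness of $U$ is exactly what guarantees that scaling $\rho$ up by $1/t$ keeps $\sup_{u\in U}\dupr{\cdot}{u}\le 1$, so both pieces are genuinely in $K$. Since $\omega$ is an extreme point, both factors equal $\omega$, giving $\rho = t\omega$ with $t\in[0,1]$, as desired. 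The degenerate cases $t=0$ (which forces $\rho$ to vanish on $U$, hence $\rho=0=0\cdot\omega$ since $U$ is absorbing and $\rho$ positive) and $t=1$ (which forces $\omega-\rho$ to vanish on $U$, hence $\rho=\omega=1\cdot\omega$) are handled directly. This shows $\omega\in V^{*,+,\ex}$ and completes the inclusion $\ex(K)\subseteq V^{*,+,\ex}$, from which the claimed equality follows by Krein-Milman.
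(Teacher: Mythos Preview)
Your overall strategy matches the paper's: show $K = U^\circ \cap V^{*,+}$ is weak-$^*$-compact via Banach--Alaoglu, apply Krein--Milman, then prove $\ex(K)\subseteq V^{*,+,\ex}$. The gap is in this last step, specifically in verifying that $(\omega-\rho)/(1-t)\in K$. You write that directedness of $U$ ``guarantees that scaling $\rho$ up by $1/t$ keeps $\sup_{u\in U}\dupr{\,\cdot\,}{u}\le 1$,'' but the inclusion $\rho/t\in K$ is immediate from the definition $t=\sup_{u\in U}\dupr{\rho}{u}$ and uses no directedness at all. The substantive claim is $\sup_{u\in U}\dupr{\omega-\rho}{u}\le 1-t$, and \emph{this} is where directedness enters: one needs the functional $h(\sigma)\coloneqq\sup_{u\in U}\dupr{\sigma}{u}$ to be super-additive on positive functionals, i.e.\ $h(\rho)+h(\omega-\rho)\le h(\omega)$. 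The paper proves this explicitly, using that for any $u_1,u_2\in U$ directedness gives $u\in U$ with $u\ge u_1,u_2$, so that $\dupr{\omega}{u}\ge\dupr{\rho}{u_1}+\dupr{\omega-\rho}{u_2}$ by positivity of $\rho$ and $\omega-\rho$. Without this your decomposition is not known to have both pieces in $K$, and your degenerate case $t=1$ (which also requires $h(\omega-\rho)=0$) is unjustified for the same reason.

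A smaller imprecision: the chain $0\le\dupr{\rho}{u}\le\dupr{\omega}{u}$ holds only for $u\in V^+$, not for arbitrary $u\in U$. To conclude $t\le 1$ you should first observe (again via directedness together with balancedness, since $0,u,-u\in U$) that for each $u\in U$ there is $v\in U\cap V^+$ with $-v\le u\le v$, whence $\abs{\dupr{\rho}{u}}\le\dupr{\rho}{v}\le\dupr{\omega}{v}\le 1$; this is how the paper establishes $\rho\in K$ (and likewise $\omega-\rho\in K$).
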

\begin{proof}
  First note that $K = U^\circ \cap V^{*,+}$ with $U^\circ$ like in \eqref{eq:BanachAlaoglu}
  because $U$ is balanced.
  As $U^\circ$ is convex and weak\=/$^*$-compact by the Banach-Alaoglu Theorem, and as $V^{*,+}$ is convex and
  weak\=/$^*$-closed in $V^*$, it follows that $K$ is also convex and weak\=/$^*$-compact.
  The Krein-Milman Theorem then shows that $K = \clconvHull{\ex(K)}$.

  In order to complete the proof it is sufficient to show that $\ex(K) \subseteq V^{*,+,\ex}$.
  Denote the linear span of $K$ in $V^*$ by $W$. Then the map $h \colon W \to {[0,\infty[}$,
  \begin{align*}
    \omega \mapsto h(\omega) \coloneqq \sup \set[\big]{\abs{\dupr{\omega}{u}}}{u\in U}
  \end{align*}
  is a seminorm on $W$ and $K = \set[\big]{\omega \in W\cap V^{*,+}}{h(\omega) \le 1}$.
  Moreover, as $U$ is balanced and directed, even $h(\omega + \omega') \ge h(\omega) + h(\omega')$,
  hence $h(\omega + \omega') = h(\omega) + h(\omega')$,
  holds for all $\omega,\omega' \in W \cap V^{*,+}$.

  Now let $\omega \in \ex(K)$ be given. If $h(\omega) = 0$, then $\omega = 0$ because $U$ is absorbing,
  so $\omega \in V^{*,+,\ex}$ is trivially fulfilled.
  Otherwise $h(\omega) = 1$ because on the one hand, $h(\omega) \le 1$ is clear, and on the other,
  $\omega = h(\omega) \big( h(\omega)^{-1} \omega \big) + \big(1-h(\omega)\big) 0$
  is a representation of $\omega$ as a convex combination of the two elements $h(\omega)^{-1} \omega$
  and $0$ of $K$, which excludes the possibility that $h(\omega) \in {]0,1[}$.
  In this second case that $h(\omega)=1$, consider some $\rho \in V^{*,+}$ that fulfils $\rho \le \omega$.
  Then for all $u\in U$ there exists a $v \in U \cap V^{*,+}$ such that $-v \le u \le v$
  because $U$ is balanced and directed, hence
  $\abs{\dupr{\rho}{u}} \le \dupr{\rho}{v} \le \dupr{\omega}{v}\le 1$. This implies $\rho \in K$,
  and the same estimate with $\omega - \rho$ in place of $\rho$ shows that $\omega-\rho \in K$ as well.
  If $h(\rho) = 0$ or $h(\omega-\rho) = 0$, then $\rho = \mu \omega$ with $\mu = 0$ or $\mu = 1$,
  respectively. Otherwise
  $\omega = h(\rho) \big( h(\rho)^{-1} \rho \big) + h(\omega-\rho) \big( h(\omega-\rho)^{-1} (\omega-\rho) \big)$
  is a representation of $\omega$ as a non-trivial convex combination of the two elements
  $h(\rho)^{-1} \rho$ and $h(\omega-\rho)^{-1} (\omega-\rho)$ of $K$ and thus
  $\rho = \mu \omega$ with $\mu = h(\rho)$. We conclude that $\omega \in V^{*,+,\ex}$ in this case as well.
\end{proof}
\begin{corollary} \label{corollary:extremal}
  Let $V$ be a directed ordered vector space. If there exists a locally convex topology $\tau$ on
  $V$ whose filter of $0$-neighbourhoods has a basis consisting of absorbing balanced convex
  and saturated as well as directed subsets of $V$ and with respect to which $V^+$ is closed,
  then the order on $V$ is induced by its extremal positive linear functionals.
\end{corollary}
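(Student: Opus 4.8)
The plan is to chain the two preceding results. Lemma~\ref{lemma:HahnBanachApplication} will produce a $\tau$-continuous positive linear functional separating a given $v\in V\setminus V^+$ from $V^+$, and Proposition~\ref{proposition:extremal} will let me express the relevant weak-$^*$-compact set of functionals as the closed convex hull of its extremal members; the final step then extracts from this an \emph{extremal} functional that still separates $v$. The role of the hypothesis on $\tau$ is twofold: closedness of $V^+$ feeds the Hahn--Banach step, and the special shape of the $0$-neighbourhood basis supplies the set $U$ needed for Proposition~\ref{proposition:extremal}.

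So let $v\in V\setminus V^+$ be given. Since $V^+$ is $\tau$-closed by assumption, Lemma~\ref{lemma:HahnBanachApplication} yields an $\omega\in V^{*,+}$ that is $\tau$-continuous and satisfies $\dupr{\omega}{v}<0$. First I would exploit the continuity of $\omega$ together with the prescribed basis of $0$-neighbourhoods: the preimage $\set{u\in V}{\abs{\dupr{\omega}{u}}<1}$ is a $\tau$-neighbourhood of $0$, so it contains one of the basic neighbourhoods $U$, which by hypothesis is absorbing, balanced, convex, saturated and directed. In particular $\dupr{\omega}{u}\le 1$ for all $u\in U$, so if $K$ denotes the set associated to this $U$ as in Proposition~\ref{proposition:extremal} (observe that $U$ is absorbing, balanced and directed, which is exactly what that proposition requires), then $\omega\in K$.

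Next I would apply Proposition~\ref{proposition:extremal} to obtain $K=\clconvHull{K\cap V^{*,+,\ex}}$, and then decode membership in this weak-$^*$ closed convex hull through the characterization~\eqref{eq:bipolar}. Applying \eqref{eq:bipolar} with $S\coloneqq K\cap V^{*,+,\ex}$ to the single vector $v$ gives $\dupr{\omega}{v}\in\clconvHull{\set{\dupr{\rho}{v}}{\rho\in K\cap V^{*,+,\ex}}}$. The set on the right is the closed convex hull in $\RR$ of a bounded set of reals (bounded because $K$ is weak-$^*$-compact and $\rho\mapsto\dupr{\rho}{v}$ is weak-$^*$-continuous), hence a closed interval whose left endpoint is the infimum of the numbers $\dupr{\rho}{v}$ with $\rho\in K\cap V^{*,+,\ex}$. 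Since $\dupr{\omega}{v}<0$ lies in this interval, that infimum is strictly negative, and therefore there exists some $\rho\in K\cap V^{*,+,\ex}\subseteq V^{*,+,\ex}$ with $\dupr{\rho}{v}<0$. As $v\in V\setminus V^+$ was arbitrary, this shows that the order on $V$ is induced by its extremal positive linear functionals.

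I expect the only genuinely delicate point to be the passage from $\omega\in\clconvHull{K\cap V^{*,+,\ex}}$ to the existence of a \emph{single} extremal $\rho$ with $\dupr{\rho}{v}<0$: one is tempted to think that $\omega$ itself must be made extremal, but projecting onto the one-dimensional evaluation at $v$ via \eqref{eq:bipolar} avoids this and reduces the whole matter to the elementary fact that the closed convex hull of a set of real numbers is an interval. The remainder is bookkeeping about which properties of the basic neighbourhoods $U$ are actually used—only absorbing, balanced and directed enter the argument directly, the convexity and saturatedness merely guaranteeing that $\tau$ is a locally convex topology of the kind for which the hypothesis ``$V^+$ is $\tau$-closed'' is meaningful.
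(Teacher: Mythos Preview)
Your argument is correct and follows essentially the same route as the paper's own proof: apply Lemma~\ref{lemma:HahnBanachApplication} to obtain a $\tau$-continuous $\omega\in V^{*,+}$ with $\dupr{\omega}{v}<0$, use continuity to capture $\omega$ in the set $K$ attached to a basic neighbourhood $U$, invoke Proposition~\ref{proposition:extremal} to write $K=\clconvHull{K\cap V^{*,+,\ex}}$, and then read off the desired extremal $\rho$ from \eqref{eq:bipolar}. Your added explanation of why \eqref{eq:bipolar} forces some $\rho$ to satisfy $\dupr{\rho}{v}<0$ is a welcome elaboration of what the paper leaves implicit; the only minor inaccuracy is in your closing remark, where saturatedness is not actually needed to make $\tau$ locally convex (it is simply part of the hypothesis but plays no role in this particular proof).
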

\begin{proof}
  Given $v\in V\setminus V^+$, then Lemma~\ref{lemma:HahnBanachApplication} shows that there
  exists a positive linear functional $\omega$ on $V$ such that $\dupr{\omega}{v} < 0$
  and which is continuous with respect to $\tau$. Continuity of $\omega$ implies that
  there exists an absorbing balanced convex and saturated as well as directed subset $U$
  of $V$ such that $\abs[\big]{\dupr{\omega}{u}} \le 1$ for all $u\in U$. Let
  $K\coloneqq \set[\big]{\rho \in V^{*,+}}{\forall_{u\in U} : \dupr{\rho}{u}\le 1}$
  like in the previous Proposition~\ref{proposition:extremal}, then
  $\omega \in K = \clconvHull{K\cap V^{*,+,\ex}}$
  and Equation~\eqref{eq:bipolar} shows that there necessarily exists a $\rho \in K\cap V^{*,+,\ex}$
  which also fulfils $\dupr{\rho}{v} < 0$.
\end{proof}
There is a rather large class of ordered vector spaces for which the normal topology
can not only be described explicitly, but also allows to apply the above
Corollary~\ref{corollary:extremal}:
\begin{definition}
  An ordered vector space $V$ is said to be \emph{$\sigma$\=/bounded} if there exists
  an increasing sequence $(\hat{v}_n)_{n\in \NN}$ in $V^+$ with the property
  that for all $v\in V$ there is an $n\in \NN$ such that $-\hat{v}_n \le v \le \hat{v}_n$
  holds. Such a sequence will be called a \emph{dominating sequence}.
\end{definition}
Note that being $\sigma$\=/bounded can be seen as the combination of two properties: First,
it is required that $V$ is directed, and then, additionally,
that there exists an increasing sequence $(\hat{v}_n)_{n\in \NN}$ in $V^+$ which
has the property that for every $v\in V^+$ there exists an $n\in \NN$
such that $v\le \hat{v}_n$.
\begin{definition} \label{definition:Udelta}
  Let $V$ be a $\sigma$\=/bounded ordered vector space with a dominating sequence
  $(\hat{v}_n)_{n\in \NN}$, and let $(\delta_n)_{n\in \NN}$ be a sequence in ${]0,\infty[}$.
  Then the subset $U_\delta$ of $V$ is defined as the union of increasing order intervals
  \begin{align}
    U_\delta
    \coloneqq
    \bigcup_{N\in \NN} \Big[ -\sum\nolimits_{n=1}^N \delta_n \hat{v}_n , \sum\nolimits_{n=1}^N \delta_n \hat{v}_n \Big]\,.
  \end{align}
\end{definition}
Of course, $U_\delta$ depends not only on the sequence $(\delta_n)_{n\in \NN}$, but
also on the choice of the dominating sequence $(\hat{v}_n)_{n\in \NN}$ and of $V$ itself,
which will always be clear from the context.
\begin{proposition} \label{proposition:Udelta}
  Let $V$ be a $\sigma$\=/bounded ordered vector space with a dominating sequence
  $(\hat{v}_n)_{n\in \NN}$, and let $(\delta_n)_{n\in \NN}$ be a sequence in ${]0,\infty[}$.
  Then $U_\delta$ is an absorbing balanced convex and saturated as well as directed subset of $V$.
\end{proposition}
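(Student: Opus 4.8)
The plan is to reduce each of the five properties to a statement about a single order interval, exploiting that the relevant intervals are nested. First I would introduce the partial sums $s_N \coloneqq \sum_{n=1}^N \delta_n \hat{v}_n$ and record their two basic features: since each $\hat{v}_n \in V^+$ and each $\delta_n > 0$, we have $s_N \in V^+$, and since $s_{N+1} - s_N = \delta_{N+1}\hat{v}_{N+1} \in V^+$ the sequence $(s_N)_{N\in\NN}$ is increasing. The latter is the key enabling observation: it gives $[-s_N, s_N] \subseteq [-s_{N+1}, s_{N+1}]$, because $-s_{N+1} \le -s_N$ and $s_N \le s_{N+1}$. Hence $U_\delta$ is an increasing, and in particular directed, union of order intervals, so any two of its elements lie in a common interval $[-s_K, s_K]$.

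With this in hand, convexity, balancedness and saturatedness each follow from the corresponding property of the single interval $[-s_K, s_K]$. Every order interval is convex and saturated (the latter was already remarked after the definition of saturated sets), and $[-s_K,s_K]$ is balanced because it is symmetric and contains $0$. Given two elements of $U_\delta$, I would place both into one interval $[-s_K, s_K]$ and apply the relevant single-interval property there; the conclusion then automatically lands in $[-s_K, s_K] \subseteq U_\delta$. For balancedness I would additionally note, for $\lambda \in [0,1]$ and $s \in [-s_K, s_K]$, the chain $-s_K \le -\lambda s_K \le \lambda s \le \lambda s_K \le s_K$ (using $(1-\lambda)s_K \in V^+$), and reduce the case $\lambda \in {[-1,0[}$ to this one via the symmetry $s \in [-s_K,s_K] \iff -s \in [-s_K,s_K]$.

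For the absorbing property I would invoke countable domination directly: given $v \in V$, there is an $n$ with $-\hat{v}_n \le v \le \hat{v}_n$, and multiplying by $\delta_n > 0$ together with $\delta_n \hat{v}_n \le s_n$ (since $s_n - \delta_n\hat{v}_n \in V^+$) yields $\delta_n v \in [-s_n, s_n] \subseteq U_\delta$, so $\lambda v \in U_\delta$ for $\lambda = \delta_n$. Finally, directedness follows because any two elements sit in a common $[-s_K, s_K]$, and $s_K$ is then a common upper bound lying in $U_\delta$, as $s_K \in [-s_K, s_K]$ thanks to $2 s_K \in V^+$.

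None of the steps is genuinely difficult; the only points requiring care are the bookkeeping with negative scalars in the balancedness argument and the consistent use of the nestedness of the intervals, which is precisely what makes the union-of-intervals structure behave, for these five properties, like a single order interval.
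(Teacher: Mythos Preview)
Your proof is correct and follows essentially the same approach as the paper: both reduce the four ``convex-type'' properties to those of a single symmetric order interval $[-s_K,s_K]$ by exploiting that $U_\delta$ is an increasing union of such intervals, and both handle the absorbing property via the dominating sequence with the scalar $\delta_n$. The only difference is that you spell out in detail the verifications (balancedness via the chain of inequalities, absorbing via $\delta_n\hat v_n \le s_n$, directedness via the upper bound $s_K$) that the paper leaves as ``easily seen''.
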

\begin{proof}
  Given $v\in V$, then there exists an $n\in \NN$ such that $-\hat{v}_n \le v \le \hat{v}_n$
  and therefore $\delta_n v \in U_\delta$. This shows that $U_\delta$ is absorbing.

  Every order interval in $V$ of the form $[-w,w]$ with $w\in V^+$ is easily seen to be
  balanced, convex, saturated and directed. As $U_\delta$ is the union of an
  increasing sequence of such sets, it is balanced, convex, saturated and directed
  itself.
\end{proof}
\begin{proposition} \label{proposition:neighbourhoods}
  Let $V$ be a $\sigma$\=/bounded ordered vector space and $(\hat{v}_n)_{n\in \NN}$
  a dominating sequence in $V$. Then the set of all $U_\delta$ with
  $(\delta_n)_{n\in \NN}$ a sequence in ${]0,\infty[}$ is a basis
  of the filter of $0$-neighbourhoods of the normal topology on $V$.
\end{proposition}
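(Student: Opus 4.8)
The plan is to verify the two conditions that make $\{U_\delta\}$ a basis of the filter $\mathcal{N}_0$ of $0$-neighbourhoods of the normal topology: that this collection is a basis of a filter in the sense of the preliminaries (closure under forming subsets of pairwise intersections), and that the filter it generates equals $\mathcal{N}_0$. Write $\mathcal{B}$ for the collection of all absorbing balanced convex and saturated subsets of $V$, so that by definition $\mathcal{N}_0 = \filterHull{\mathcal{B}}$. By Proposition~\ref{proposition:Udelta} every $U_\delta$ belongs to $\mathcal{B} \subseteq \mathcal{N}_0$, and since $\mathcal{N}_0$ is upward closed this already gives $\filterHull{\{U_\delta\}} \subseteq \mathcal{N}_0$. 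Thus the whole statement reduces to checking the basis-of-filter property and proving that every $S\in\mathcal{B}$ contains some $U_\delta$, the latter yielding the reverse inclusion $\mathcal{N}_0 \subseteq \filterHull{\{U_\delta\}}$.

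For the basis-of-filter property, given sequences $(\delta_n)_{n\in\NN}$ and $(\delta'_n)_{n\in\NN}$ in ${]0,\infty[}$, I would put $\delta''_n \coloneqq \min\{\delta_n,\delta'_n\}$. Since each $\hat{v}_n \in V^+$, the inequalities $\delta''_n \hat{v}_n \le \delta_n \hat{v}_n$ hold termwise, so the partial sums satisfy $\sum_{n=1}^N \delta''_n \hat{v}_n \le \sum_{n=1}^N \delta_n \hat{v}_n$ for every $N$. A one-line monotonicity argument then shows that the order interval defining $U_{\delta''}$ at level $N$ is contained in the corresponding interval for $U_\delta$, hence $U_{\delta''} \subseteq U_\delta$, and symmetrically $U_{\delta''} \subseteq U_{\delta'}$. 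This gives $U_{\delta''} \subseteq U_\delta \cap U_{\delta'}$, as required.

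The substantial step is showing that an arbitrary $S\in\mathcal{B}$ contains some $U_\delta$. Because $S$ is absorbing, for each $n$ I can choose $\delta_n \in {]0,\infty[}$ small enough that $2^n \delta_n \hat{v}_n \in S$; as $S$ is balanced this also gives $-2^n \delta_n \hat{v}_n \in S$ and $0\in S$. For any fixed $N$ the element $\sum_{n=1}^N \delta_n \hat{v}_n$ can then be written as the convex combination $\sum_{n=1}^N 2^{-n}\,(2^n\delta_n \hat{v}_n) + \bigl(1-\sum_{n=1}^N 2^{-n}\bigr)\,0$ of elements of $S$, where $\sum_{n=1}^N 2^{-n} < 1$ leaves exactly the room needed for the coefficient of $0$. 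Convexity places $\sum_{n=1}^N \delta_n \hat{v}_n$ in $S$, balancedness places $-\sum_{n=1}^N \delta_n \hat{v}_n$ in $S$, and since $\sum_{n=1}^N \delta_n \hat{v}_n \in V^+$ the saturation of $S$ then forces the entire order interval $[-\sum_{n=1}^N \delta_n \hat{v}_n, \sum_{n=1}^N \delta_n \hat{v}_n]$ into $S$. Taking the union over $N$ yields $U_\delta \subseteq S$, and combined with the first paragraph this establishes $\mathcal{N}_0 = \filterHull{\{U_\delta\}}$.

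The main obstacle, and the only genuinely delicate point, is controlling the partial sums $\sum_{n=1}^N \delta_n \hat{v}_n$: choosing $\delta_n$ merely so that $\delta_n \hat{v}_n \in S$ is not enough, since convexity controls averages rather than sums. The geometric weighting by $2^n$ is precisely the device that rewrites each partial sum as a legitimate convex combination, with the point $0$ absorbing the deficit in the coefficients, and this is the step on which the whole argument hinges.
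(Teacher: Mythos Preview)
Your proof is correct and follows essentially the same approach as the paper: both use the $2^{-n}$ weighting trick to place the partial sums $\sum_{n=1}^N \delta_n \hat{v}_n$ inside an absorbing balanced convex saturated set, then invoke saturation to capture the order intervals. Your explicit verification of the basis-of-filter intersection property via $\delta''_n = \min\{\delta_n,\delta'_n\}$ is an extra step the paper omits (it follows automatically once both filter inclusions are established), but it is correct and harmless.
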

\begin{proof}
  The previous Proposition~\ref{proposition:Udelta} already shows that such a set $U_\delta$
  is a $0$-neighbourhood of the normal topology on $V$ for every
  sequence $(\delta_n)_{n\in \NN}$ in ${]0,\infty[}$.

  Conversely, if $S$ is a $0$-neighbourhood of the normal topology
  on $V$, then there exists a sequence $(\delta_n)_{n\in \NN}$ in ${]0,\infty[}$
  such that $U_\delta \subseteq S$, which can be constructed as follows:
  Let $S'$ be an absorbing balanced convex and saturated subset of $V$ such that $S'\subseteq S$.
  For every $n\in \NN$ there exists an $\epsilon_n \in {]0,\infty[}$ such that
  $\epsilon_n \hat{v}_n \in S'$ because $S'$ is absorbing, and we can define
  $\delta_n \coloneqq 2^{-n} \epsilon_n \in {]0,\infty[}$ for all $n\in \NN$. Then
  $\sum_{n=1}^N \delta_n \hat{v}_n = \sum_{n=1}^N 2^{-n} \epsilon_n \hat{v}_n \in S'$
  for all $N\in \NN$ because $S'$ is balanced and convex. Consequently,
  $\big[-\sum_{n=1}^N \delta_n \hat{v}_n,\sum_{n=1}^N \delta_n \hat{v}_n \big] \subseteq S' \subseteq S$
  for all $N \in \NN$ because $S'$ is also saturated, thus $U_\delta \subseteq S$.
\end{proof}
The above description of the normal topology on a $\sigma$\=/bounded ordered vector space
has essentially already been given in
\cite[Props.~4.1.2, 4.1.3]{schmuedgen:UnboundedOperatorAlgebraAndRepresentationTheory}
and has also been applied indirectly to the decomposition of positive linear functionals
into extremal ones in
\cite[Thm.~12.4.7]{schmuedgen:UnboundedOperatorAlgebraAndRepresentationTheory}.
Even though these results where stated for $O^*$-algebras or $^*$-algebras,
their above generalization to ordered vector spaces does not require any new techniques.
Combining them with a completely order-theoretic characterization of those $\sigma$\=/bounded
ordered vector spaces whose set of positive elements is closed, gives
our first important result:
\begin{theorem} \label{theorem:orderedvs}
  Let $V$ be a $\sigma$\=/bounded ordered vector space, then the following are equivalent:
  \begin{enumerate}
    \item $V$ is Archimedean.   \label{item:orderedvs:Arch}
    \item $V^+$ is closed in $V$ with respect to the normal topology. \label{item:orderedvs:closed}
    \item The order on $V$ is induced by its extremal positive linear functionals. \label{item:orderedvs:ex}
    \item The order on $V$ is induced by its positive linear functionals. \label{item:orderedvs:pos}
  \end{enumerate}
\end{theorem}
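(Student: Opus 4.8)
The plan is to prove the four statements equivalent by establishing the cycle of implications \refitem{item:orderedvs:Arch}${}\Rightarrow{}$\refitem{item:orderedvs:closed}${}\Rightarrow{}$\refitem{item:orderedvs:ex}${}\Rightarrow{}$\refitem{item:orderedvs:pos}${}\Rightarrow{}$\refitem{item:orderedvs:Arch}. Recall that a countably dominated ordered vector space is in particular directed, so all four statements are meaningful. Three of the four implications are short. For \refitem{item:orderedvs:closed}${}\Rightarrow{}$\refitem{item:orderedvs:ex} I would apply Corollary~\ref{corollary:extremal} to the normal topology: by Proposition~\ref{proposition:neighbourhoods} the sets $U_\delta$ form a basis of its filter of $0$-neighbourhoods, and by Proposition~\ref{proposition:Udelta} every $U_\delta$ is absorbing, balanced, convex, saturated and directed, so the hypotheses of the corollary are met once $V^+$ is assumed closed. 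The implication \refitem{item:orderedvs:ex}${}\Rightarrow{}$\refitem{item:orderedvs:pos} is immediate because $V^{*,+,\ex}\subseteq V^{*,+}$, so a separating extremal positive functional is in particular a separating positive one. Finally, for \refitem{item:orderedvs:pos}${}\Rightarrow{}$\refitem{item:orderedvs:Arch} I would argue as follows: if $a\le\epsilon w$ holds for some $w\in V^+$ and all $\epsilon\in{]0,\infty[}$, then $\dupr{\omega}{a}\le\epsilon\dupr{\omega}{w}$ for every $\omega\in V^{*,+}$, and letting $\epsilon\downarrow 0$ gives $\dupr{\omega}{a}\le 0$; since this holds for all $\omega\in V^{*,+}$, the characterization of \refitem{item:orderedvs:pos} forces $a\le 0$, which is precisely the Archimedean property.

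The remaining implication \refitem{item:orderedvs:Arch}${}\Rightarrow{}$\refitem{item:orderedvs:closed} is the heart of the argument and the step I expect to cost real work. Here I would fix a dominating sequence $(\hat{v}_n)_{n\in\NN}$ and show that the complement of $V^+$ is open in the normal topology. So let $v\in V\setminus V^+$, which means $-v\not\le 0$. Using Proposition~\ref{proposition:neighbourhoods} and the explicit shape of $U_\delta$ from Definition~\ref{definition:Udelta}, one checks that $v\notin\overline{V^+}$ is equivalent to the existence of a sequence $(\delta_n)_{n\in\NN}$ in ${]0,\infty[}$ such that
\begin{align*}
  -v\not\le \sum\nolimits_{n=1}^N \delta_n\hat{v}_n \qquad\text{for every } N\in\NN\,;
\end{align*}
indeed, since $U_\delta$ is balanced and is the increasing union of the order intervals $[-\sum_{n=1}^N\delta_n\hat{v}_n,\sum_{n=1}^N\delta_n\hat{v}_n]$, the neighbourhood $v+U_\delta$ meets $V^+$ if and only if $-v\le\sum_{n=1}^N\delta_n\hat{v}_n$ for some $N$.

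It thus remains to construct such a sequence $(\delta_n)_{n\in\NN}$, and this is exactly where the Archimedean hypothesis enters, in the following contrapositive form: if $a\in V$ satisfies $a\not\le 0$, then for every $w\in V^+$ there is an $\epsilon\in{]0,\infty[}$ with $a\not\le\epsilon w$. I would build the $\delta_n$ recursively so as to keep the partial sums $S_N\coloneqq\sum_{n=1}^N\delta_n\hat{v}_n$ below $-v$ in the sense that $-v\not\le S_N$ for all $N$, starting from $S_0\coloneqq 0$ and the hypothesis $-v\not\le 0$. Given $\delta_1,\dots,\delta_{N-1}$ with $-v\not\le S_{N-1}$, the element $a\coloneqq -v-S_{N-1}$ satisfies $a\not\le 0$, so applying the contrapositive Archimedean property with $w\coloneqq\hat{v}_N\in V^+$ yields an $\epsilon\in{]0,\infty[}$ with $-v-S_{N-1}\not\le\epsilon\hat{v}_N$; setting $\delta_N\coloneqq\epsilon$ then gives $-v\not\le S_{N-1}+\delta_N\hat{v}_N=S_N$, which closes the induction. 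The main subtlety to get right is precisely this translation of closedness into the order-theoretic condition on the $U_\delta$, together with the observation that, because the defining union is increasing, one really has to defeat every single partial sum $S_N$; the Archimedean property supplies just enough room at each step to do so one index at a time.
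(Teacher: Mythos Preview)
Your proposal is correct and follows essentially the same route as the paper: the same cycle of implications, the same appeal to Corollary~\ref{corollary:extremal} via Propositions~\ref{proposition:Udelta} and~\ref{proposition:neighbourhoods}, and the same recursive construction of $(\delta_n)$ using the contrapositive Archimedean property (the paper phrases the recursion via $w_N\coloneqq v+\sum_{n=1}^N\delta_n\hat v_n\in V\setminus V^+$, which is exactly your condition $-v\not\le S_N$). Your explicit verification that $(v+U_\delta)\cap V^+\neq\emptyset$ is equivalent to $-v\le S_N$ for some $N$ is a small elaboration the paper leaves implicit, but otherwise the arguments coincide.
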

\begin{proof}
  Let $(\hat{v}_n)_{n\in \NN}$ be a dominating sequence of $V$, which exists by assumption.

  First consider the case that $V$ is Archimedean. In order to prove the implication
  \refitem{item:orderedvs:Arch} $\Longrightarrow$ \refitem{item:orderedvs:closed},
  we have to show that $V\setminus V^+$ is open with respect to the normal topology
  on $V$. Given $v\in V\setminus V^+$, then
  construct recursively a sequence $(w_n)_{n\in \NN_0}$ in $V\setminus V^+$ and a sequence
  $(\delta_n)_{n\in \NN}$ in ${]0,\infty[}$ as follows: Set $w_0 \coloneqq v$. If $w_{n-1}$
  has been defined for some $n\in \NN$, then choose $\delta_n \in {]0,\infty[}$
  such that $-w_{n-1} \le \delta_n \hat{v}_n$ does not hold, i.e.\ such that
  $w_{n-1} + \delta_n \hat{v}_n \in V\setminus V^+$, and set
  $w_n \coloneqq w_{n-1} + \delta_n \hat{v}_n$.
  Note that such a $\delta_n$ exists because $w_{n-1} \in V\setminus V^+$ and
  because $V$ is Archimedean by assumption.
  From the construction of the sequence $(\delta_n)_{n\in \NN}$ it follows that
  $v+U_\delta \subseteq V\setminus V^+$: Indeed, for every $x \in v+U_\delta$
  there exists an $N\in \NN$ such that $x \le v+\sum_{n=1}^N \delta_n \hat{v}_n = w_N \in V\setminus V^+$,
  hence $x \in V\setminus V^+$. The previous Proposition~\ref{proposition:neighbourhoods}
  now shows that $V\setminus V^+$ is a neighbourhood of $v$ with respect to the
  normal topology on $V$.

  The implication \refitem{item:orderedvs:closed} $\Longrightarrow$ \refitem{item:orderedvs:ex}
  is just an application of Corollary~\ref{corollary:extremal} using that all the subsets
  $U_\delta$ of $V$ with $(\delta_n)_{n\in \NN}$ a sequence in ${]0,\infty[}$ are
  absorbing balanced convex and saturated as well as directed by Proposition~\ref{proposition:Udelta}
  and form a basis of the filter of $0$-neighbourhoods of the normal
  topology on the $\sigma$\=/bounded ordered vector space $V$ by the previous
  Proposition~\ref{proposition:neighbourhoods}.

  Finally, the implication \refitem{item:orderedvs:ex} $\Longrightarrow$ \refitem{item:orderedvs:pos}
  is trivial, and in order to prove \refitem{item:orderedvs:pos} $\Longrightarrow$ \refitem{item:orderedvs:Arch},
  assume that the order on $V$ is induced by its positive linear functionals
  and let $v\in V$ as well as $w\in V^+$ be given such that $v\le \epsilon w$
  for all $\epsilon \in {]0,\infty[}$. Then it follows that $\dupr{\rho}{v} \le 0$ holds
  for all $\rho \in V^{*,+}$ because $\RR$ is Archimedean, thus $v\le 0$.
\end{proof}
One special class of $\sigma$\=/bounded ordered vector spaces are ordered vector spaces $V$
with a \emph{strong order unit} $e$, i.e.\ an element $e\in V^+$ with the property that for all
$v\in V$ there exists $\lambda \in {[0,\infty[}$ such that $v \le \lambda e$.
In this case, $(ne)_{n\in \NN}$ is a dominating sequence, and the existence of positive
linear functionals on Archimedean ordered vector spaces with a strong order unit was already
proven in \cite[Lemma~2.5]{kadison:RepresentationTheoremForCommutativeTopologicalAlgebras}.
The above Theorem~\ref{theorem:orderedvs} generalizes
this classical result to $\sigma$\=/bounded ordered vector spaces and applies
the decomposition of positive linear functionals into extremal ones from
\cite[Thm.~12.4.7]{schmuedgen:UnboundedOperatorAlgebraAndRepresentationTheory}.

It should be unnecessary to point out that there are many examples of important
$\sigma$\=/bounded ordered vector spaces which do not have a strong order unit.
These can be as ordinary as the space of real-valued polynomial functions on $\RR$
with the pointwise order.
\section{Representation of Ordered \texorpdfstring{$^*$-Algebras}{*-Algebras}}
\label{sec:alg}
We now come to the main section which develops the generalized Gelfand--Naimark theorems
for ordered $^*$\=/algebras.
The definition of $^*$\=/algebras has already been given in the introduction. An element
$a$ of a $^*$\=/algebra $\mathcal{A}$ is called \emph{Hermitian} if $a=a^*$ and the
real linear subspace of all Hermitian elements in $\mathcal{A}$ is denoted by $\mathcal{A}_\Hermitian$.
An \emph{ordered $^*$\=/algebra} is a $^*$\=/algebra $\mathcal{A}$ together with
a partial order $\le$ on $\mathcal{A}_\Hermitian$ such that $\mathcal{A}_\Hermitian$
becomes an ordered vector space fulfilling $0 \le \Unit$ and $a^*b\,a \in \mathcal{A}_\Hermitian^+$
for all $a\in \mathcal{A}$, $b\in\mathcal{A}^+_\Hermitian$. This ordered vector
space of Hermitian elements is automatically directed because $4a = (a+\Unit)^2 - (a-\Unit)^2$
and $(a\pm \Unit)^2 \in \mathcal{A}^+_\Hermitian$ hold for all $a\in \mathcal{A}_\Hermitian$.
A \emph{unital $^*$\=/subalgebra} $\mathcal{B}$ of an ordered $^*$\=/algebra $\mathcal{A}$,
i.e.\ a linear subspace $\mathcal{B}$ of $\mathcal{A}$ fulfilling $\Unit \in \mathcal{B}$ as well as
$b^* \in \mathcal{B}$ and $bb' \in \mathcal{B}$ for all $b,b'\in \mathcal{B}$,
is again an ordered $^*$\=/algebra with the order on $\mathcal{B}_\Hermitian$ inherited
from $\mathcal{A}_\Hermitian$.
Ordered $^*$\=/algebras have already been used for understanding representations
of $^*$\=/algebras, e.g.\ in \cite{powers:SelfadjointAlgebrasOfUnboundedOperators2}
or \cite{schmuedgen:UnboundedOperatorAlgebraAndRepresentationTheory}. The set of positive
Hermitian elements of an ordered $^*$\=/algebra is an ``m-admissible cone''
in the language of \cite{schmuedgen:UnboundedOperatorAlgebraAndRepresentationTheory},
or a ``quadratic module'' in the language of (non-commutative) real algebraic geometry.

An ordered $^*$\=/algebra $\mathcal{A}$ is called \emph{$\sigma$\=/bounded} if the ordered
vector space $\mathcal{A}_\Hermitian$ is $\sigma$\=/bounded. Similarly, an 
ordered $^*$\=/algebra $\mathcal{A}$ is said to be \emph{closed} if 
$\mathcal{A}_\Hermitian$ is an Archimedean ordered vector space.
Note that this is not a topological property but rather an order property;
only for $\sigma$-bounded ordered $^*$\=/algebras it follows from Theorem~\ref{theorem:orderedvs}
that $\mathcal{A}$ is closed as an ordered $^*$\=/algebra if and only if the quadratic module $\mathcal{A}^+_\Hermitian$ is closed in $\mathcal{A}_\Hermitian$
with respect to a certain locally convex topology. 
Note also that this property is not at all related to the notion of Archimedean quadratic modules in real algebraic geometry.
It is unfortunate that the term ``Archimedean'' is in several different ways.
Because of this, the term ``Archimedean'' should be avoided in the context of ordered $^*$\=/algebras.
There are some important examples of ordered $^*$\=/algebras that will be relevant in the following:
\begin{example}
  For every set $X$, the space $\CC^X$ of all complex-valued functions
  on $X$ with the pointwise operations and the pointwise order on the Hermitian
  (i.e.\ real-valued) functions is a commutative closed ordered $^*$\=/algebra.
\end{example}

\begin{example}
  If $\Dom$ is a pre-Hilbert space, i.e.\ a complex vector space endowed
  with an inner product $\skal{\argument}{\argument}$,
  antilinear in the first and linear in the second argument, then write $\Adbar(\Dom)$
  for the $^*$\=/algebra of all linear endomorphisms $a$ of $\Dom$ that are adjointable
  in the algebraic sense, i.e.\ for which there exists a (necessarily unique) linear
  endomorphism $a^*$ of $\Dom$ such that
  $\skal{\phi}{a(\psi)} = \skal{a^*(\phi)}{\psi}$ holds for all $\phi,\psi \in \Dom$.
  An element $a$ of $\Adbar(\Dom)$ is Hermitian if and only if $\skal{\phi}{a(\phi)} \in \RR$
  for all $\phi \in \Dom$ and $\Adbar(\Dom)_\Hermitian$ will always be endowed with
  the usual partial order of operators, i.e.\ given $a\in \Adbar(\Dom)_\Hermitian$,
  then $a$ is positive if and only if $\skal{\phi}{a(\phi)} \ge 0$ for all $\phi \in \Dom$.
  This way, $\Adbar(\Dom)$ becomes an closed ordered $^*$\=/algebra which is
  not commutative in general. Unital $^*$-subalgebras of $\Adbar(\Dom)$ are called
  \emph{$O^*$-algebras}.
\end{example}

\begin{example}
  A \emph{$\Phi$-algebra} is an Archimedean ordered vector space $\mathcal{R}$
  in which the supremum $r\vee s \coloneqq \sup \{r,s\}$ and the infimum $r \wedge s \coloneqq \inf \{r,s\}$
  of any two elements $r,s \in \mathcal{R}$ exists, and that is endowed with a
  bilinear product that turns $\mathcal{R}$ into an associative, commutative and unital
  real algebra with the properties that $rs \in \mathcal{R}^+$ for all $r,s\in\mathcal{R}^+$
  and $rs=0$ for all $r,s \in \mathcal{R}$ with $r\wedge s = 0$.
  Then it follows from the properties of $\vee$ and $\wedge$ that
  $(\abs{r}-r)\wedge(\abs{r}+r) = \abs{r}+((-r)\wedge r) = \abs{r}-\abs{r} = 0$, 
  so $\abs{r}^2 - r^2 = (\abs{r}-r)(\abs{r}+r) = 0$,
  i.e.\ $r^2 = \abs{r}^2 \ge 0$ for all $r\in\mathcal{R}$. The complexification
  $\mathcal{A} \coloneqq \mathcal{R} \otimes \CC$ of such a $\Phi$\=/algebra $\mathcal{R}$
  is a commutative ordered $^*$\=/algebra whose
  real linear subspace of Hermitian elements is $\mathcal{A}_\Hermitian \cong \mathcal{R}$.
  For $\Phi$\=/algebras there exists e.g.~a representation theorem by means of
  functions with values in the extended real line $[-\infty,\infty]$, 
  \cite{henriksen.johnson:structureOfArchimedeanLatticeOrderedAlgebras}. 
  In Theorem~\ref{theorem:asfunctions} we will prove a representation theorem by means of
  $\RR$-valued functions under additional assumptions.
\end{example}

\begin{example}
  If $\mathcal{A}$ is a $C^*$\=/algebra, then its Hermitian elements can be
  endowed with a partial order that turns $\mathcal{A}$ into an closed ordered $^*$\=/algebra
  in which the positive Hermitian elements are precisely those Hermitian ones
  whose spectrum is a subset of ${[0,\infty[}$. This is a well-known, but non-trivial result
  in the theory of $C^*$\=/algebras. Showing e.g.\ that the
  sum of two positive Hermitian elements is again a positive Hermitian element required
  some considerable effort in the original proof of the (non-commutative) representation theorem
  for $C^*$\=/algebras in \cite{gelfand.naimark:ImbeddingofNormedRingsIntoRingOfOperators}.
  Moreover, in a $C^*$\=/algebra $\mathcal{A}$, the unit $\Unit$ is a strong
  order unit of $\mathcal{A}_\Hermitian$, i.e.\ for every $a\in \mathcal{A}_\Hermitian$
  there exists a $\lambda \in {[0,\infty[}$ such that $a \le \lambda \Unit$, see
  also the discussion under Theorem~\ref{theorem:orderedvs}.
\end{example}

\begin{example}
  Let $\mathcal{A}$ be a $^*$\=/algebra and define the set
  \begin{align*}
    \mathcal{A}^{++}_\Hermitian
    \coloneqq
    \set[\Big]{\sum\nolimits_{n=1}^N a_n^*a_n}{N\in\NN; a_1,\dots,a_N\in \mathcal{A}}\
  \end{align*}
  of \emph{algebraically positive elements} in $\mathcal{A}$. If $\mathcal{A}^{++}_\Hermitian$
  does not contain a real linear subspace of $\mathcal{A}_\Hermitian$ besides the trivial one $\{0\}$,
  then $\mathcal{A}$ can be turned into an ordered $^*$\=/algebra such that
  $\mathcal{A}^+_\Hermitian = \mathcal{A}^{++}_\Hermitian$. Otherwise, i.e. if
  there is $a\in\mathcal{A}_\Hermitian \setminus \{0\}$ such that both $a$ and $-a$ are
  algebraically positive, there is no possibility to turn $\mathcal{A}$ into an ordered
  $^*$\=/algebra. So the existence of a suitable order on a $^*$\=/algebra
  (especially the antisymmetry of the order) is a non-trivial condition. We will see that this,
  together with two or three further conditions, allows to prove representation theorems similar
  to, but more general than those known for $C^*$\=/algebras.
\end{example}
If $\mathcal{A}$ is a $^*$\=/algebra, then its complex dual vector space $\mathcal{A}^*$ carries
an antilinear involution $\argument^* \colon \mathcal{A}^* \to \mathcal{A}^*$, $\omega \mapsto \omega^*$,
given by $\dupr{\omega^*}{a} \coloneqq \cc{\dupr{\omega}{a^*}}$ for all $a\in\mathcal{A}$.
We say again that an element $\omega\in \mathcal{A}^*$ is \emph{Hermitian} if $\omega^*=\omega$
and write $\mathcal{A}_\Hermitian^*$ for the set of all Hermitian linear functionals
on $\mathcal{A}$, which is a real linear subspace of $\mathcal{A}^*$. Note that a linear
functional $\omega$ on $\mathcal{A}$ is Hermitian if and only if $\dupr{\omega}{a} \in \RR$
holds for all $a\in \mathcal{A}_\Hermitian$. Thus every $\omega \in \mathcal{A}^*_\Hermitian$
can be restricted to an $\RR$-linear functional on $\mathcal{A}_\Hermitian$, and one
can check that this restriction describes an $\RR$-linear isomorphism between the
vector spaces $\mathcal{A}^*_\Hermitian$ and $(\mathcal{A}_\Hermitian)^*$. An
\emph{(extremal) positive Hermitian linear functional} on an ordered $^*$\=/algebra
$\mathcal{A}$ is then defined as a Hermitian linear functional
on $\mathcal{A}$ whose restriction to a (real) linear functional on the ordered vector space
$\mathcal{A}_\Hermitian$ is an (extremal) positive linear functional. The sets of these
(extremal) positive Hermitian linear functionals are denoted by $\mathcal{A}^{*,+}_\Hermitian$
and $\mathcal{A}^{*,+,\ex}_\Hermitian$, respectively, and we say that the order on $\mathcal{A}$
is \emph{induced by its (extremal) positive Hermitian linear functionals} if the order
on $\mathcal{A}_\Hermitian$ is induced by its (extremal) positive linear functionals.
Note that positivity of a Hermitian linear functional $\omega$ on an ordered $^*$\=/algebra $\mathcal{A}$
is in general a stronger condition than just the requirement that $\dupr{\omega}{a^*a} \ge 0$
for all $a\in \mathcal{A}$, which is used quite often in the literature when linear functionals on
general $^*$\=/algebras are discussed.
However, if $\mathcal{A}$ is an ordered $^*$\=/algebra
in which only the algebraically positive Hermitian linear functionals are positive,
i.e.\ if $\mathcal{A}$ is of the type of the fifth example above, then
$\dupr{\omega}{a^*a} \ge 0$ for all $a\in \mathcal{A}$ is also sufficient for a
Hermitian linear functional $\omega$ on $\mathcal{A}$ to be positive.

Positive Hermitian linear functionals on an ordered $^*$\=/algebra $\mathcal{A}$
have some nice properties: Given $\omega \in \mathcal{A}^{*,+}_\Hermitian$
and $a,b\in \mathcal{A}$, then the \emph{Cauchy-Schwarz inequality}
\begin{align}
  \abs[\big]{\dupr{\omega}{b^*a}}^2 &\le \dupr{\omega}{b^*b} \dupr{\omega}{a^*a}
  \shortintertext{holds, as well as}
  \abs[\big]{\dupr{\omega}{a}}^2 &\le \dupr{\omega}{\Unit} \dupr{\omega}{a^*a}
\end{align}
in the special case that $b = \Unit$. This has an important consequence: If a positive
Hermitian linear functional $\omega$ on $\mathcal{A}$ fulfils $\dupr{\omega}{\Unit} = 0$,
then $\omega = 0$. A \emph{state} on $\mathcal{A}$ is a positive Hermitian
linear functional $\omega$ on $\mathcal{A}$ that fulfils $\dupr{\omega}{\Unit} = 1$.
So every $\tilde{\omega} \in \mathcal{A}^{*,+}_\Hermitian \setminus \{0\}$
is a multiple of a unique state $\omega$ on $\mathcal{A}$,
namely of $\omega = \dupr{\tilde{\omega}}{\Unit}^{-1} \tilde{\omega}$.
The set of all states on $\mathcal{A}$ will be denoted by $\States(\mathcal{A})$
and is clearly a convex (possibly empty) subset of the real vector space $\mathcal{A}^*_\Hermitian$.
Again, note that by this definition, states are positive on whole $\mathcal{A}^+_\Hermitian$,
not just on squares.

A map $\Phi \colon \mathcal{A} \to \mathcal{B}$ between two $^*$\=/algebras is said
to be \emph{multiplicative} if $\Phi(aa') = \Phi(a) \Phi(a')$ holds for all $a,a' \in \mathcal{A}$.
Furthermore, it is called a \emph{unital $^*$\=/homomorphism} if it is linear and multiplicative,
maps the unit of $\mathcal{A}$ to the unit of $\mathcal{B}$ and fulfils
$\Phi(a^*) = \Phi(a)^*$ for all $a\in \mathcal{A}$. This last condition is equivalent to
$\Phi(a) \in \mathcal{B}_\Hermitian$ for all $a\in \mathcal{A}_\Hermitian$.
If $\mathcal{A}$ and $\mathcal{B}$ are even ordered $^*$\=/algebras, then such a unital
$^*$\=/homomorphism $\Phi$ is called \emph{positive} or an \emph{order embedding}
if its restriction to an $\RR$-linear map between the ordered vector spaces
$\mathcal{A}_\Hermitian$ and $\mathcal{B}_\Hermitian$
is positive or an order embedding, respectively.

For ordered $^*$\=/algebras we are going to discuss two different types of representations,
which correspond to the first two examples mentioned above:
\begin{definition}
  Let $\mathcal{A}$ be an ordered $^*$\=/algebra. Then a \emph{representation as operators}
  of $\mathcal{A}$ is a tuple $(\Dom, \pi)$ of a pre-Hilbert space $\Dom$ and a
  positive unital $^*$\=/homomorphism $\pi \colon \mathcal{A} \to \Adbar(\Dom)$.
  Similarly, a
  \emph{representation as functions} of $\mathcal{A}$ is a tuple $(X, \pi)$ of a
  set $X$ and a positive unital $^*$\=/homomorphism $\pi \colon \mathcal{A} \to \CC^X$.
  Moreover, such a representation (as operators or as functions) is called
  \emph{faithful} if $\pi$ is even an order embedding.
\end{definition}
Of course, representations as functions are especially interesting for
commutative ordered $^*$\=/algebras. The existence of faithful
representations of an ordered $^*$\=/algebra
$\mathcal{A}$ is closely linked to the question of whether or not the order
on $\mathcal{A}$ is induced by its (extremal) positive Hermitian linear functionals.
\subsection{Representation as Operators}
The well-known construction of the GNS-representation yields a representation
as operators of an ordered $^*$\=/algebra $\mathcal{A}$ out of any positive Hermitian
linear functional on it:
\begin{proposition} \label{proposition:GNS}
  Let $\mathcal{A}$ be an ordered $^*$\=/algebra and $\omega \in \mathcal{A}^{*,+}_\Hermitian$.
  Then the map $\skal{\argument}{\argument}_\omega \colon \mathcal{A} \times \mathcal{A} \to \CC$,
  \begin{align}
    (a,b) \mapsto \skal{a}{b}_\omega \coloneqq \dupr{\omega}{a^* b}
  \end{align}
  is sesquilinear (antilinear in the first, linear in the second argument) and fulfils
  $\cc{\skal{a}{b}_\omega} = \skal{b}{a}_\omega$ as well as $\skal{a}{a}_\omega \in {[0,\infty[}$
  for all $a,b\in \mathcal{A}$. Write $\seminorm{\omega}{\argument}$ for the corresponding
  seminorm on $\mathcal{A}$, defined as $\seminorm{\omega}{a} \coloneqq \skal{a}{a}_\omega^{1/2}$
  for all $a\in \mathcal{A}$, and $\mathrm{ker}\seminorm{\omega}{\argument} \coloneqq \set[\big]{a\in\mathcal{A}}{\seminorm{\omega}{a}=0}$
  for its kernel. Then $\skal{\argument}{\argument}_\omega$ remains well-defined
  on the quotient vector space $\mathcal{A} / \mathrm{ker}\seminorm{\omega}{\argument}$
  on which it describes an inner product.
  Now write $\Dom_\omega$ for the pre-Hilbert space of $\mathcal{A} / \mathrm{ker}\seminorm{\omega}{\argument}$
  with inner product $\skal{\argument}{\argument}_\omega$, and $[b]_\omega \in \mathcal{A} / \mathrm{ker}\seminorm{\omega}{\argument}$
  for the equivalence class of an element $b\in\mathcal{A}$. Then for every $a\in\mathcal{A}$,
  the map $\pi_\omega(a) \colon \Dom_\omega \to \Dom_\omega$,
  \begin{equation}
    [b]_\omega \mapsto \pi_\omega(a)\big( [b]_\omega \big) \coloneqq [ab]_\omega
  \end{equation}
  is a well-defined linear endomorphism of $\Dom_\omega$, it is even adjointable
  with adjoint $\pi_\omega(a^*)$, and the resulting map $\mathcal{A} \ni a \mapsto \pi_\omega(a) \in \Adbar(\Dom_\omega)$
  is a positive unital $^*$\=/homomorphism. Altogether, $\big(\Dom_\omega,\pi_\omega\big)$
  is a representation as operators of the ordered $^*$\=/algebra $\mathcal{A}$.
\end{proposition}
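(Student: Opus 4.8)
The plan is to verify the claimed properties in the natural order, starting from the algebraic structure of the sesquilinear form and ending with the homomorphism property of $\pi_\omega$. First I would check that $\skal{\argument}{\argument}_\omega$ is sesquilinear: this is immediate from linearity of $\omega$ and bilinearity of the product, together with the antilinearity of $\argument^*$ in forming $a^*b$. The Hermitian symmetry $\cc{\skal{a}{b}_\omega} = \skal{b}{a}_\omega$ follows from $\dupr{\omega^*}{c} = \cc{\dupr{\omega}{c^*}}$ applied to $c = a^*b$, using $\omega^* = \omega$ and $(a^*b)^* = b^*a$. Positivity $\skal{a}{a}_\omega = \dupr{\omega}{a^*a} \ge 0$ is exactly the statement that $a^*a \in \mathcal{A}_\Hermitian^+$ (which holds by taking $b = \Unit$ in the defining property $a^*b\,a \in \mathcal{A}_\Hermitian^+$) combined with positivity of $\omega$.

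Next I would pass to the quotient. The key tool is the Cauchy-Schwarz inequality stated just before the proposition: it gives $\abs{\skal{a}{b}_\omega}^2 \le \seminorm{\omega}{a}^2\,\seminorm{\omega}{b}^2$, which shows both that $\mathrm{ker}\seminorm{\omega}{\argument}$ is a linear subspace and that $\skal{\argument}{\argument}_\omega$ descends to a well-defined, and now \emph{definite}, inner product on $\Dom_\omega = \mathcal{A}/\mathrm{ker}\seminorm{\omega}{\argument}$. Concretely, if $\seminorm{\omega}{a} = 0$ then $\skal{a}{b}_\omega = 0$ for every $b$, so the form is independent of the representatives chosen, and it is positive definite on the quotient by construction.

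The central step is to show that left multiplication $\pi_\omega(a)\colon [b]_\omega \mapsto [ab]_\omega$ is well-defined, linear, and adjointable with adjoint $\pi_\omega(a^*)$. For well-definedness I must check that $b \in \mathrm{ker}\seminorm{\omega}{\argument}$ implies $ab \in \mathrm{ker}\seminorm{\omega}{\argument}$; the cleanest route is the adjointability identity
\begin{align*}
  \skal{c}{\pi_\omega(a)b}_\omega
  = \dupr{\omega}{c^*(ab)}
  = \dupr{\omega}{(a^*c)^*b}
  = \skal{\pi_\omega(a^*)c}{b}_\omega\,,
\end{align*}
valid for all $b,c \in \mathcal{A}$ by associativity and $(a^*c)^* = c^*a$. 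Taking $c = ab$ and using Cauchy-Schwarz bounds $\seminorm{\omega}{ab}^2 = \skal{ab}{ab}_\omega = \skal{\pi_\omega(a^*)(ab)}{b}_\omega$ by $\seminorm{\omega}{b}$, so $\seminorm{\omega}{b} = 0$ forces $\seminorm{\omega}{ab} = 0$; thus $\pi_\omega(a)$ descends to the quotient. The same displayed identity, now read on the quotient, is precisely the statement that $\pi_\omega(a)$ is adjointable with adjoint $\pi_\omega(a^*)$, so $\pi_\omega(a) \in \Adbar(\Dom_\omega)$.

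Finally I would assemble the homomorphism properties. Linearity of $a \mapsto \pi_\omega(a)$ and the multiplicativity $\pi_\omega(aa') = \pi_\omega(a)\pi_\omega(a')$ follow directly from $[aa'b]_\omega = \pi_\omega(a)([a'b]_\omega)$ using associativity, and $\pi_\omega(\Unit) = \mathrm{id}$ is clear; the $^*$-property $\pi_\omega(a^*) = \pi_\omega(a)^*$ is exactly the adjointability just established. It remains to verify \emph{positivity} of $\pi_\omega$ as a map of ordered $^*$-algebras, i.e.\ that $a \in \mathcal{A}_\Hermitian^+$ implies $\pi_\omega(a) \in \Adbar(\Dom_\omega)_\Hermitian^+$. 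For this I compute, for any $b \in \mathcal{A}$,
\begin{align*}
  \skal{[b]_\omega}{\pi_\omega(a)[b]_\omega}_\omega = \dupr{\omega}{b^*ab} \ge 0\,,
\end{align*}
where nonnegativity is guaranteed by the defining axiom $b^*a\,b \in \mathcal{A}_\Hermitian^+$ for $a \in \mathcal{A}_\Hermitian^+$ together with positivity of $\omega$. I expect this positivity step to be the one deserving the most care, since it is where the order-theoretic hypothesis on $\mathcal{A}$ (rather than merely its $^*$-algebra structure) genuinely enters; everything else is a routine transcription of the classical GNS argument, with the Cauchy-Schwarz inequality doing the work of controlling the kernel.
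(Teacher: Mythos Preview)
Your proposal is correct and matches the paper's approach exactly: the paper's proof simply defers the classical GNS steps to a textbook reference and singles out the positivity of $\pi_\omega$ via the computation $\skal{[b]_\omega}{\pi_\omega(a)[b]_\omega}_\omega = \dupr{\omega}{b^*a\,b} \ge 0$ as the only non-classical ingredient, which is precisely the point you identify as deserving the most care. Your write-up is a faithful (and more explicit) unpacking of the same argument.
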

\begin{proof}
  The only detail which is not completely part of the classical GNS-construction
  for $^*$\=/algebras as described e.g.\ in \cite[Sec.~8.6]{schmuedgen:UnboundedOperatorAlgebraAndRepresentationTheory}
  is the observation that $\pi_\omega$ is not only a unital $^*$\=/homomorphism, but
  also positive, because $\skal[\big]{[b]_\omega}{\pi_\omega(a)\,[b]_\omega} = \dupr{\omega}{b^*a\,b}\ge 0$
  for all $a\in\mathcal{A}^+_\Hermitian$.
\end{proof}
\begin{definition}
  Let $\mathcal{A}$ be an ordered $^*$\=/algebra and $\omega \in \mathcal{A}^{*,+}_\Hermitian$,
  then the representation as operators $\big( \Dom_\omega, \pi_\omega \big)$ from
  the previous Proposition~\ref{proposition:GNS} is called the \emph{GNS representation of $\mathcal{A}$
  with respect to $\omega$}.
\end{definition}
The problem of existence of representations as operators of ordered $^*$\=/algebras can
be treated completely analogous to the case of general $^*$\=/algebras:
\begin{proposition} \label{proposition:asoperators}
  Let $\mathcal{A}$ be an ordered $^*$\=/algebra, then there exists a faithful representation
  as operators of $\mathcal{A}$ if and only if the order on $\mathcal{A}$ is
  induced by its positive Hermitian linear functionals.
\end{proposition}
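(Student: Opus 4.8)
The plan is to derive both implications from the GNS construction of Proposition~\ref{proposition:GNS}, combined with the characterization of order embeddings recalled in Section~\ref{sec:order}: since $\pi$ is by definition a positive unital $^*$-homomorphism, it is faithful precisely when $\pi(a) \notin \Adbar(\Dom)^+_\Hermitian$ for every $a \in \mathcal{A}_\Hermitian \setminus \mathcal{A}^+_\Hermitian$, i.e.\ when every such $a$ admits a vector $\phi \in \Dom$ with $\skal{\phi}{\pi(a)(\phi)} < 0$. The statement then becomes a dictionary between such negative vectors and negative values of positive Hermitian linear functionals.

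For the easy implication, suppose $(\Dom,\pi)$ is a faithful representation as operators and let $a \in \mathcal{A}_\Hermitian \setminus \mathcal{A}^+_\Hermitian$. Faithfulness supplies a vector $\phi \in \Dom$ with $\skal{\phi}{\pi(a)(\phi)} < 0$. I would then verify that the vector functional $\omega \colon \mathcal{A} \to \CC$, $b \mapsto \skal{\phi}{\pi(b)(\phi)}$, lies in $\mathcal{A}^{*,+}_\Hermitian$: linearity is immediate, the Hermitian property follows from $\pi(b^*) = \pi(b)^*$ together with the conjugate symmetry of the inner product, and positivity follows because $\pi(c) \in \Adbar(\Dom)^+_\Hermitian$ for every $c \in \mathcal{A}^+_\Hermitian$. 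As $\dupr{\omega}{a} = \skal{\phi}{\pi(a)(\phi)} < 0$, the order on $\mathcal{A}$ is induced by its positive Hermitian linear functionals.

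For the converse, assume the order is so induced; the idea is to glue all GNS representations into a single one. I would form the algebraic direct sum $\Dom \coloneqq \bigoplus_{\omega \in \mathcal{A}^{*,+}_\Hermitian} \Dom_\omega$ of finitely supported families, equipped with the inner product $\skal{(\xi_\omega)_\omega}{(\eta_\omega)_\omega} \coloneqq \sum_\omega \skal{\xi_\omega}{\eta_\omega}_\omega$ (a finite sum), which makes $\Dom$ a pre-Hilbert space, and let $\pi(a)$ act componentwise by $\pi_\omega(a)$ on each summand. Since each $\pi_\omega$ is a positive unital $^*$-homomorphism with $\pi_\omega(a)$ adjointable of adjoint $\pi_\omega(a^*)$, the componentwise operator $\pi(a)$ is again adjointable with adjoint $\pi(a^*)$, and $\pi$ inherits being a positive unital $^*$-homomorphism, so $(\Dom,\pi)$ is a representation as operators. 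For faithfulness, given $a \in \mathcal{A}_\Hermitian \setminus \mathcal{A}^+_\Hermitian$ the hypothesis yields an $\omega \in \mathcal{A}^{*,+}_\Hermitian$ with $\dupr{\omega}{a} < 0$; taking $\xi \in \Dom$ supported in the $\omega$-summand with $\xi_\omega = [\Unit]_\omega$, I would compute $\skal{\xi}{\pi(a)(\xi)} = \skal{[\Unit]_\omega}{\pi_\omega(a)([\Unit]_\omega)}_\omega = \dupr{\omega}{a} < 0$, whence $\pi(a) \notin \Adbar(\Dom)^+_\Hermitian$ and $\pi$ is an order embedding.

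The only genuinely non-routine point is the direct-sum bookkeeping: one must confirm that the componentwise operator on the algebraic (finitely supported) direct sum is honestly adjointable in the algebraic sense of Section~\ref{sec:alg}, and that positivity passes to the direct sum. Both are straightforward once the finite-support convention is fixed, so the real content of the proposition is already carried by Proposition~\ref{proposition:GNS}; beyond that, the argument is the standard $^*$-algebra reasoning transcribed into the order-theoretic notion of positivity.
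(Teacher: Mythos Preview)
Your proof is correct and follows essentially the same approach as the paper: vector functionals $b \mapsto \skal{\phi}{\pi(b)(\phi)}$ for the forward direction, and the direct sum $\bigoplus_{\omega} \Dom_\omega$ of all GNS representations together with the cyclic vector $[\Unit]_\omega$ for the converse. The paper's argument is slightly terser, but the content is identical.
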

\begin{proof}
  Assume that there exists a faithful representation as operators $(\Dom,\pi)$ of $\mathcal{A}$.
  Given $a\in \mathcal{A}_\Hermitian \setminus \mathcal{A}_\Hermitian^+$, then there exists
  $\phi \in \Dom$ such that $\skal[\big]{\phi}{\pi(a)(\phi)} < 0$.
  But $\mathcal{A} \ni b \mapsto \skal[\big]{\phi}{\pi(b)(\phi)} \in \CC$ is a positive
  Hermitian linear functional. So we conclude that the order on $\mathcal{A}$ is induced
  by its positive Hermitian linear functionals.

  Conversely, assume that the order on $\mathcal{A}$ is induced by its positive Hermitian linear functionals.
  Using the GNS representations of $\mathcal{A}$,
  define the orthogonal sum of pre-Hilbert spaces
  $\Dom_{\mathrm{tot}} \coloneqq \bigoplus_{\omega \in \mathcal{A}^{*,+}_\Hermitian} \Dom_\omega$
  with inner product denoted by $\skal{\argument}{\argument}_{\mathrm{tot}}$,
  as well as for every element $a\in \mathcal{A}$ the linear endomorphism
  $\pi_{\mathrm{tot}}(a) \coloneqq \bigoplus_{\omega\in\mathcal{A}^{*,+}_\Hermitian} \pi_\omega(a)$
  of $\Dom_{\mathrm{tot}}$, i.e.\ $\sum_{\omega\in\mathcal{A}^{*,+}_\Hermitian} [b]_\omega \mapsto
  \pi_{\mathrm{tot}}(a)\big( \sum_{\omega\in\mathcal{A}^{*,+}_\Hermitian} [b]_\omega\big) \coloneqq
  \sum_{\omega\in\mathcal{A}^{*,+}_\Hermitian} \pi_{\omega}(a)\big([b]_\omega\big)$.
  Then it is easy to check that
  $\pi_{\mathrm{tot}}(a)$ is even adjointable with adjoint $\pi_{\mathrm{tot}}(a^*)$
  and that the resulting map
  $\pi_{\mathrm{tot}} \colon \mathcal{A} \to \Adbar\big( \Dom_{\mathrm{tot}} \big)$, $a\mapsto \pi_{\mathrm{tot}}(a)$
  is a positive unital $^*$\=/homomorphism. Moreover, $\pi_{\mathrm{tot}}$ is even an order embedding:
  Indeed, for every
  $a\in \mathcal{A}_\Hermitian \setminus \mathcal{A}_\Hermitian^+$ there exists an $\omega\in\mathcal{A}^{*,+}_\Hermitian$
  such that $\dupr{\omega}{a} < 0$ and thus
  $\skal[\big]{[\Unit]_\omega}{\pi_{\mathrm{tot}}(a)([\Unit]_\omega)}_{\mathrm{tot}} = \dupr{\omega}{a} < 0$.
  It follows that $\big( \Dom_{\mathrm{tot}}, \pi_{\mathrm{tot}} \big)$ is a faithful representation
  as operators.
\end{proof}
Application of Theorem~\ref{theorem:orderedvs} to the above Proposition~\ref{proposition:asoperators}
immediately yields the following generalization of the (non-commutative) Gelfand--Naimark theorem:
\begin{theorem} \label{theorem:asoperators}
  Let $\mathcal{A}$ be a $\sigma$\=/bounded ordered $^*$\=/algebra, then
  $\mathcal{A}$ has a faithful representation as operators if and only if $\mathcal{A}$
  is closed.
\end{theorem}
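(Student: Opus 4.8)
The plan is to read off the claim directly from the two main structural results already established, namely Proposition~\ref{proposition:asoperators} and Theorem~\ref{theorem:orderedvs}, by translating between statements about the ordered $^*$\=/algebra $\mathcal{A}$ and the corresponding statements about the ordered vector space $\mathcal{A}_\Hermitian$ of its Hermitian elements.

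First I would recall the relevant dictionary. By definition, $\mathcal{A}$ is countably dominated (resp.\ Archimedean) precisely when the ordered vector space $\mathcal{A}_\Hermitian$ is countably dominated (resp.\ Archimedean). Likewise, the order on $\mathcal{A}$ is induced by its positive Hermitian linear functionals exactly when the order on $\mathcal{A}_\Hermitian$ is induced by its positive linear functionals; this is simply the definition adopted above for ordered $^*$\=/algebras, which rests on the $\RR$\=/linear isomorphism $\mathcal{A}^*_\Hermitian \cong (\mathcal{A}_\Hermitian)^*$ given by restriction.

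With this dictionary in place, the argument is a short chain of equivalences. Since $\mathcal{A}_\Hermitian$ is a countably dominated ordered vector space, Theorem~\ref{theorem:orderedvs} — specifically the equivalence of \refitem{item:orderedvs:Arch} and \refitem{item:orderedvs:pos} — yields that $\mathcal{A}_\Hermitian$ is Archimedean if and only if its order is induced by its positive linear functionals. By the dictionary above, this says that $\mathcal{A}$ is Archimedean if and only if the order on $\mathcal{A}$ is induced by its positive Hermitian linear functionals. Finally, Proposition~\ref{proposition:asoperators} identifies the latter condition with the existence of a faithful representation as operators of $\mathcal{A}$, which closes the loop and proves the asserted equivalence.

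I do not expect a genuine obstacle here: the whole content of the theorem is carried by Theorem~\ref{theorem:orderedvs} on the order-theoretic side and by the GNS\=/based Proposition~\ref{proposition:asoperators} on the representation-theoretic side. The only point requiring a little care is the bookkeeping in the first step, namely checking that the notions ``countably dominated'', ``Archimedean'', and ``order induced by (positive Hermitian) linear functionals'' are transported faithfully along the restriction isomorphism $\mathcal{A}^*_\Hermitian \cong (\mathcal{A}_\Hermitian)^*$. This, however, has essentially been arranged already by the definitions made for ordered $^*$\=/algebras, so the proof reduces to citing the two earlier results.
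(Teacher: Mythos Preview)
Your proposal is correct and follows exactly the paper's approach: the paper does not even give a separate proof, but simply states before the theorem that it is obtained by applying Theorem~\ref{theorem:orderedvs} to Proposition~\ref{proposition:asoperators}. Your write-up merely spells out the translation between $\mathcal{A}$ and $\mathcal{A}_\Hermitian$ that the paper leaves implicit.
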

The original (non-commutative) Gelfand--Naimark theorem is essentially the
special case of this Theorem~\ref{theorem:asoperators} for ordered $^*$\=/algebras $\mathcal{A}$
in which the multiplicative unit $\Unit$ is also a strong order unit. As discussed
under Theorem~\ref{theorem:orderedvs}, $\mathcal{A}$ is automatically $\sigma$\=/bounded in this case.
Note also that the image of a $\sigma$\=/bounded ordered $^*$\=/algebra $\mathcal{A}$
under a faithful representation as operators is an $O^*$\=/algebra with metrizable graph topology
in the language of \cite{schmuedgen:UnboundedOperatorAlgebraAndRepresentationTheory},
which, conversely, are always $\sigma$\=/bounded.
So the above Theorem~\ref{theorem:asoperators} yields an order-theoretic characterization
of the $O^*$\=/algebras with metrizable graph topology. This goes in a similar
direction as \cite{schmuedgen:orderStructureOfTopStarAlgOfUnboundedOperatorsI},
where a topological characterization of a large class of $O^*$\=/algebras has been given.

Despite being valid only in the $\sigma$-bounded case, Theorem~\ref{theorem:asoperators}
still implies that, heuristically, every closed ordered $^*$\=/algebra ``behaves essentially like an $O^*$\=/algebra''.
In order to make this more precise we need the following Lemma:

\begin{lemma} \label{lemma:countablydominatedautomatic}
  Let $\mathcal{A}$ be a $^*$\=/algebra that is generated by a countable subset of $\mathcal{A}$.
  If $\mathcal{A}_\Hermitian$ is endowed with any partial order $\le$ such that $\mathcal{A}$ becomes an ordered
  $^*$-algebra, then $\mathcal{A}$ with this order is automatically $\sigma$\=/bounded.
\end{lemma}
\begin{proof}
  As $\mathcal{A}$ is generated by a countable subset, then $\mathcal{A}$ has at most countable dimensions,
  i.e.~there exists a sequence $(a_n)_{n\in \NN}$ in $\mathcal{A}_\Hermitian$ such that the $\RR$-linear span of $\set{a_n}{n\in \NN}$
  is whole $\mathcal{A}_\Hermitian$. Moreover, from $(\Unit \pm a_n)^2 \in \mathcal{A}_\Hermitian^+$
  it follows that $\pm 2 a_n \le \Unit + a_n^2$ for all $n\in \NN$. So define the
  increasing sequence $(\hat{v}_n)_{n\in \NN}$ in $\mathcal{A}_\Hermitian^+$ as
  $\hat{v}_n \coloneqq n \sum_{k=1}^n (\Unit + a_n^2)$, then $(\hat{v}_n)_{n\in \NN}$ is a
  dominating sequence because for every $b \in \mathcal{A}_\Hermitian$ there exists an
  $n\in \NN$ such that $b$ can be expressed as $b = \sum_{k=1}^n 2\beta_n a_n$ with
  coefficients $\beta_1,\dots,\beta_n \in [{-n},n]$, hence $b \le \hat{v}_n$.
\end{proof}
\begin{proposition}
  Let $\mathcal{A}$ be a closed ordered $^*$\=/algebra and $(a_n)_{n\in \NN}$ any sequence in $\mathcal{A}$.
  Then the unital $^*$\=/subalgebra of $\mathcal{A}$ that is generated by $\set{a_n}{n \in \NN}$
  has a faithful representation as operators.
\end{proposition}
\begin{proof}
  By the previous Lemma~\ref{lemma:countablydominatedautomatic}, the unital $^*$\=/subalgebra of
  $\mathcal{A}$ that is generated by $\set{a_n}{n \in \NN}$ is $\sigma$-bounded,
  so Theorem~\ref{theorem:asoperators} applies.
\end{proof}

\subsection{Representation as Functions}
A slight modification of the well-known Gelfand transformation yields a representation as functions of
any ordered $^*$\=/algebra:
\begin{definition}
  Let $\mathcal{A}$ be an ordered $^*$\=/algebra, then the set of all multiplicative states
  on $\mathcal{A}$, i.e.\ of all positive unital $^*$\=/homomorphisms from $\mathcal{A}$ to $\CC$,
  will be denoted by $\MultStates(\mathcal{A})$.
\end{definition}
\begin{proposition} \label{proposition:Gelfand}
  Let $\mathcal{A}$ be an ordered $^*$\=/algebra. Then
  the map $\pi_{\mathrm{Gelfand}} \colon \mathcal{A} \to \CC^{\MultStates(\mathcal{A})}$, $a \mapsto \pi_{\mathrm{Gelfand}}(a)$
  with $\pi_{\mathrm{Gelfand}}(a) \colon \MultStates(\mathcal{A}) \to \CC$,
  \begin{align}
    \omega \mapsto \pi_{\mathrm{Gelfand}}(a)(\omega) \coloneqq \dupr{\omega}{a}
  \end{align}
  is a positive unital $^*$\=/homomorphism and $\big( \MultStates(\mathcal{A}), \pi_{\mathrm{Gelfand}} \big)$
  is a representation as functions of $\mathcal{A}$.
\end{proposition}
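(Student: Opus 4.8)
The plan is to verify directly that $\pi_{\mathrm{Gelfand}}$ satisfies each of the defining properties of a positive unital $^*$\=/homomorphism, exploiting that all algebraic operations as well as the involution and order on the target $\CC^{\MultStates(\mathcal{A})}$ are defined pointwise. The key observation is that every element of the index set $\MultStates(\mathcal{A})$ is, by its very definition, itself a positive unital $^*$\=/homomorphism $\omega \colon \mathcal{A} \to \CC$. Consequently, each property demanded of $\pi_{\mathrm{Gelfand}}$ reduces, once evaluated at an arbitrary $\omega \in \MultStates(\mathcal{A})$, to the corresponding property of that single $\omega$, so no essentially new content is required beyond unwinding the definitions.

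First I would check the purely algebraic conditions. Linearity of $\pi_{\mathrm{Gelfand}}$ follows because $\pi_{\mathrm{Gelfand}}(\lambda a + b)(\omega) = \dupr{\omega}{\lambda a + b} = \lambda \dupr{\omega}{a} + \dupr{\omega}{b}$ by linearity of each $\omega$; multiplicativity follows from $\pi_{\mathrm{Gelfand}}(ab)(\omega) = \dupr{\omega}{ab} = \dupr{\omega}{a}\dupr{\omega}{b}$ since each $\omega$ is multiplicative; and $\pi_{\mathrm{Gelfand}}(\Unit)(\omega) = \dupr{\omega}{\Unit} = 1$ since each $\omega$ is a state, so that $\pi_{\mathrm{Gelfand}}(\Unit)$ is the constant function $1$, which is precisely the unit of $\CC^{\MultStates(\mathcal{A})}$.

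Next I would treat the involution and the order, both defined pointwise on $\CC^{\MultStates(\mathcal{A})}$. Compatibility with $\argument^*$ follows from $\pi_{\mathrm{Gelfand}}(a^*)(\omega) = \dupr{\omega}{a^*} = \cc{\dupr{\omega}{a}} = \cc{\pi_{\mathrm{Gelfand}}(a)(\omega)}$, using that each $\omega$ is a $^*$\=/homomorphism; since the involution on $\CC^{\MultStates(\mathcal{A})}$ is pointwise complex conjugation, this gives $\pi_{\mathrm{Gelfand}}(a^*) = \pi_{\mathrm{Gelfand}}(a)^*$. For positivity, let $a \in \mathcal{A}_\Hermitian^+$; then for every $\omega$ the value $\dupr{\omega}{a}$ is real, because $\omega$ is Hermitian and $a = a^*$, and nonnegative, because the restriction of $\omega$ to $\mathcal{A}_\Hermitian$ is a positive linear functional. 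Hence $\pi_{\mathrm{Gelfand}}(a)$ is a pointwise nonnegative real-valued function, i.e.\ a positive element of $\CC^{\MultStates(\mathcal{A})}$; the same reasoning for arbitrary Hermitian $a$ shows that $\pi_{\mathrm{Gelfand}}$ maps $\mathcal{A}_\Hermitian$ into the real-valued functions, so its restriction is indeed an increasing $\RR$\=/linear map.

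There is no genuine obstacle here: the argument is a routine, purely formal unwinding of definitions, and the only point worth flagging is that being a representation as functions demands solely that $\pi_{\mathrm{Gelfand}}$ be a positive unital $^*$\=/homomorphism, with no faithfulness claimed at this stage. In particular the statement even holds vacuously when $\MultStates(\mathcal{A})$ happens to be empty, since then $\CC^{\MultStates(\mathcal{A})}$ is the trivial one-point $^*$\=/algebra and every required identity is satisfied automatically.
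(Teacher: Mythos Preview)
Your proof is correct and follows exactly the approach the paper takes: the paper's own proof consists of the single sentence ``This is an immediate consequence of the properties of the elements in $\MultStates(\mathcal{A})$,'' and your argument is precisely the routine unwinding of that remark.
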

\begin{proof}
  This is an immediate consequence of the properties of the elements in $\MultStates(\mathcal{A})$.
\end{proof}
In order to guarantee the existence of many multiplicative states,
we have to examine states which are at the same time extremal positive Hermitian linear functionals:
\begin{definition}
  Let $\mathcal{A}$ be an ordered $^*$\=/algebra. Then a state $\omega$ on $\mathcal{A}$
  is called \emph{pure} if $\omega$ is also an extremal positive Hermitian linear functional
  on $\mathcal{A}$. The set of all pure states on $\mathcal{A}$ will be denoted by
  $\PureStates(\mathcal{A}) \coloneqq \States(\mathcal{A}) \cap \mathcal{A}^{*,+,\ex}_\Hermitian$.
\end{definition}
The above definition of pure states is equivalent to the more common one as
extreme points of the convex set of states:
\begin{proposition} \label{proposition:pure}
  Let $\mathcal{A}$ be an ordered $^*$\=/algebra, then $\PureStates(\mathcal{A}) = \ex\big( \States(\mathcal{A})\big)$.
\end{proposition}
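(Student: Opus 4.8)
The plan is to prove the two inclusions separately. In both directions the essential move is to pass between the two notions of extremality by normalizing positive Hermitian functionals at the unit $\Unit$, using that $\dupr{\omega}{\Unit}$ records the ``total mass'' of a positive functional and that, by the Cauchy-Schwarz consequence noted above, a positive Hermitian linear functional with $\dupr{\rho}{\Unit} = 0$ must already vanish. Throughout I use that $\mathcal{A}_\Hermitian$ is directed, so that $\mathcal{A}^*_\Hermitian \cong (\mathcal{A}_\Hermitian)^*$ is an ordered vector space in which $\rho \le \omega$ means precisely that $\omega - \rho \in \mathcal{A}^{*,+}_\Hermitian$.

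First I would show $\PureStates(\mathcal{A}) \subseteq \ex\big(\States(\mathcal{A})\big)$. Let $\omega$ be a pure state and suppose $\omega = \lambda \omega_1 + (1-\lambda)\omega_2$ for some $\omega_1, \omega_2 \in \States(\mathcal{A})$ and $\lambda \in {]0,1[}$. Since $\omega - \lambda \omega_1 = (1-\lambda)\omega_2 \in \mathcal{A}^{*,+}_\Hermitian$, we have $\lambda\omega_1 \le \omega$, so extremality of $\omega$ yields a $\mu \in {[0,1]}$ with $\lambda\omega_1 = \mu\omega$. Evaluating both sides at $\Unit$ and using $\dupr{\omega_1}{\Unit} = \dupr{\omega}{\Unit} = 1$ gives $\mu = \lambda$, hence $\omega_1 = \omega$ because $\lambda \neq 0$; the same argument gives $\omega_2 = \omega$. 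Thus $\omega$ is an extreme point of $\States(\mathcal{A})$.

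For the converse inclusion, let $\omega \in \ex\big(\States(\mathcal{A})\big)$ and let $\rho \in \mathcal{A}^{*,+}_\Hermitian$ satisfy $\rho \le \omega$; I must produce a $\mu \in {[0,1]}$ with $\rho = \mu\omega$. Set $t \coloneqq \dupr{\rho}{\Unit}$. Positivity of $\rho$ and of $\omega - \rho$ forces $t \in {[0,1]}$, and when $t \in {]0,1[}$ the functionals $\omega_1 \coloneqq t^{-1}\rho$ and $\omega_2 \coloneqq (1-t)^{-1}(\omega-\rho)$ are both states with $\omega = t\omega_1 + (1-t)\omega_2$; since $\omega$ is extreme and $t \in {]0,1[}$, this forces $\omega_1 = \omega$, so $\rho = t\omega$. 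This shows $\omega$ is extremal, hence a pure state. The only genuine care — and the closest thing to an obstacle — is the pair of boundary cases $t \in \{0,1\}$, which cannot be handled by the normalization above since one would divide by $0$: here one instead invokes that positivity together with $\dupr{\rho}{\Unit} = 0$ implies $\rho = 0$ (so $\mu = 0$ works when $t=0$, and applying this to $\omega - \rho$ gives $\rho = \omega$, i.e. $\mu = 1$, when $t=1$).
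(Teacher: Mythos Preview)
Your proof is correct and follows essentially the same approach as the paper's: both directions proceed by normalizing positive Hermitian functionals at $\Unit$, and the boundary cases $\dupr{\rho}{\Unit}\in\{0,1\}$ are handled via the Cauchy--Schwarz consequence that a positive Hermitian functional vanishing on $\Unit$ must be zero. The only difference is cosmetic ordering---the paper disposes of the boundary cases first, whereas you treat them afterwards.
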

\begin{proof}
  If $\omega$ is an extreme point of $\States(\mathcal{A})$, then it is also an extremal
  positive Hermitian linear functional, hence a pure state:
  Indeed, given $\rho \in \mathcal{A}^{*,+}_\Hermitian$
  such that $\rho \le \omega$, then, as a consequence of the Cauchy-Schwarz inequality,
  either both $\dupr{\rho}{\Unit}$ and $\dupr{\omega-\rho}{\Unit}$ are in ${]0,1[}$,
  or $\rho = \mu \omega$ with $\mu \in \{0,1\}$. In the former case,
  \begin{align*}
    \omega = \dupr{\omega-\rho}{\Unit} \big( \dupr{\omega-\rho}{\Unit}^{-1} (\omega-\rho) \big) + \dupr{\rho}{\Unit} \big( \dupr{\rho}{\Unit}^{-1} \rho\big)
  \end{align*}
  is a representation of $\omega$ as a non-trivial convex combination
  of the two elements $\dupr{\omega-\rho}{\Unit}^{-1} (\omega-\rho)$ and $\dupr{\rho}{\Unit}^{-1} \rho$
  of $\States(\mathcal{A})$, which implies $\rho = \mu \omega$ with $\mu = \dupr{\rho}{\Unit}$.

  Conversely, if $\omega$ is pure state on $\mathcal{A}$, then it is an extreme point
  of $\States(\mathcal{A})$:
  If $\omega = \lambda \rho + (1-\lambda)\rho'$ with $\rho,\rho' \in \States(\mathcal{A})$
  and $\lambda \in {]0,1[}$, then $\lambda \rho \le \omega$ and $(1-\lambda) \rho' \le \omega$.
  Consequently, there are $\mu,\mu'\in {[0,1]}$ such that
  $\lambda \rho = \mu \omega$ and $(1-\lambda) \rho' = \mu'\omega$.
  Evaluation on $\Unit$ shows that $\lambda = \mu$ and $(1-\lambda) = \mu'$,
  hence $\rho = \omega = \rho'$.
\end{proof}
The sets of pure states and of multiplicative states on an ordered $^*$\=/algebra are closely related.
In order to see this, the following concept will be helpful:
\begin{definition}
  Let $\mathcal{A}$ be an ordered $^*$\=/algebra, $\omega$ a state on $\mathcal{A}$ and $a\in \mathcal{A}$.
  The \emph{variance of $\omega$ on $a$} is defined as
  \begin{align}
    \Var_\omega(a)
    :=
    \dupr[\big]{\omega}{\big(a-\dupr{\omega}{a}\Unit\big)^*\big(a-\dupr{\omega}{a}\Unit\big)}\,.
  \end{align}
\end{definition}
Note that $\Var_\omega(a) \in {[0,\infty[}$ and that the alternative formula $\Var_\omega(a) = \dupr{\omega}{a^*a} - \abs{\dupr{\omega}{a}}^2$ holds for
every state $\omega$ on every ordered $^*$\=/algebra $\mathcal{A}$ and all $a\in\mathcal{A}$.
\begin{proposition} \label{proposition:charactersAreExtreme}
  If $\mathcal{A}$ is an ordered $^*$\=/algebra and $\omega$ a multiplicative state
  on $\mathcal{A}$, then $\omega$ is a pure state on $\mathcal{A}$.
\end{proposition}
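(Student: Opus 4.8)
The plan is to show that $\omega$ is extremal, i.e.\ that $\omega \in \mathcal{A}^{*,+,\ex}_\Hermitian$; since a multiplicative state is in particular a state (it is positive and satisfies $\dupr{\omega}{\Unit}=1$), this immediately places $\omega$ in $\States(\mathcal{A}) \cap \mathcal{A}^{*,+,\ex}_\Hermitian = \PureStates(\mathcal{A})$, which is exactly the claim. The decisive observation driving everything is that multiplicativity forces the variance to vanish identically: for any $a\in\mathcal{A}$ one computes $\dupr{\omega}{a^*a} = \dupr{\omega}{a^*}\dupr{\omega}{a} = \cc{\dupr{\omega}{a}}\,\dupr{\omega}{a} = \abs{\dupr{\omega}{a}}^2$, so that $\Var_\omega(a) = 0$ for every $a\in\mathcal{A}$.

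To verify extremality I would take an arbitrary $\rho \in \mathcal{A}^{*,+}_\Hermitian$ with $\rho \le \omega$ and set $\mu \coloneqq \dupr{\rho}{\Unit}$; positivity of both $\rho$ and $\omega-\rho$ together with $\dupr{\omega}{\Unit}=1$ gives $\mu \in {[0,1]}$. The goal is then to prove $\rho = \mu\omega$. Fixing $a\in\mathcal{A}$ and abbreviating $b \coloneqq a - \dupr{\omega}{a}\Unit$, the element $b^*b$ is positive Hermitian with $\dupr{\omega}{b^*b} = \Var_\omega(a) = 0$. Because $\rho \le \omega$ and $b^*b \in \mathcal{A}^+_\Hermitian$, this squeezes $0 \le \dupr{\rho}{b^*b} \le \dupr{\omega}{b^*b} = 0$, so $\dupr{\rho}{b^*b}=0$ as well.

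Next I would apply the Cauchy-Schwarz inequality for the positive Hermitian linear functional $\rho$ in its special form (the case with $\Unit$ in one slot), which yields $\abs{\dupr{\rho}{b}}^2 \le \dupr{\rho}{\Unit}\,\dupr{\rho}{b^*b} = \mu \cdot 0 = 0$, hence $\dupr{\rho}{b}=0$. Unwinding the definition of $b$ gives $\dupr{\rho}{a} = \dupr{\omega}{a}\,\dupr{\rho}{\Unit} = \mu\,\dupr{\omega}{a}$, and since $a$ was arbitrary, $\rho = \mu\omega$ with $\mu\in{[0,1]}$. This is precisely the defining condition of an extremal positive linear functional, so $\omega \in \mathcal{A}^{*,+,\ex}_\Hermitian$ and therefore $\omega$ is a pure state.

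I do not expect any genuine obstacle here: the entire argument reduces to the vanishing of $\Var_\omega$, an immediate consequence of multiplicativity, combined with the monotonicity of $\rho$ beneath $\omega$ and the Cauchy-Schwarz inequality already recorded for positive Hermitian linear functionals. The only point deserving a little care is that the identity $\rho = \mu\omega$ must be established on all of $\mathcal{A}$, not merely on Hermitian elements; but this is automatic, since both sides are complex-linear and the equality $\dupr{\rho}{a} = \mu\,\dupr{\omega}{a}$ was derived for every $a\in\mathcal{A}$.
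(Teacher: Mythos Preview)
Your proof is correct and takes a genuinely different route from the paper's. Both arguments hinge on the same key observation, namely that multiplicativity of $\omega$ forces $\Var_\omega(a)=0$ for every $a\in\mathcal{A}$, but they exploit it differently. The paper first invokes Proposition~\ref{proposition:pure} to identify pure states with extreme points of $\States(\mathcal{A})$, and then verifies the extreme-point condition via the variance decomposition identity
\[
\Var_{\lambda\rho+(1-\lambda)\rho'}(a)=\lambda\Var_\rho(a)+(1-\lambda)\Var_{\rho'}(a)+\lambda(1-\lambda)\abs[\big]{\dupr{\rho-\rho'}{a}}^2,
\]
from which $\Var_\omega=0$ immediately forces $\rho=\rho'$. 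You instead work directly with the defining condition of $\mathcal{A}^{*,+,\ex}_\Hermitian$: given $\rho\le\omega$, you squeeze $\dupr{\rho}{b^*b}$ between $0$ and $\dupr{\omega}{b^*b}=0$ for $b=a-\dupr{\omega}{a}\Unit$, and then Cauchy--Schwarz gives $\dupr{\rho}{b}=0$, hence $\rho=\mu\omega$. Your argument is slightly more self-contained, as it needs neither Proposition~\ref{proposition:pure} nor the variance decomposition formula; the paper's approach, on the other hand, isolates a clean algebraic identity that is of some independent interest.
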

\begin{proof}
  By Proposition~\ref{proposition:pure}, the pure states are precisely the extreme points
  of the set of all states. So assume that
  $\rho,\rho' \in\States(\mathcal{A})$ and $\lambda \in {]0,1[}$ fulfil
  $\omega = \lambda \rho + (1-\lambda)\rho'$, then one can check that the identity
  \begin{align*}
    \Var_\omega(a)
    =
    \Var_{\lambda\rho + (1-\lambda)\rho'}(a)
    =
    \lambda \Var_{\rho}(a) + (1-\lambda) \Var_{\rho'}(a) + \lambda(1-\lambda)\abs[\big]{\dupr{\rho-\rho'}{a}}^2
  \end{align*}
  holds for all $a\in \mathcal{A}$. Moreover, $\Var_\omega(a) = 0$ because $\omega$ is multiplicative.
  It follows that $\abs{\dupr{\rho-\rho'}{a}}^2 = 0$ for all $a\in\mathcal{A}$
  because $\Var_{\rho}(a)$ and $\Var_{\rho'}(a)$ are non-negative, so $\rho = \rho' = \omega$.
\end{proof}
In order to be able to obtain a converse statement, we need some more assumptions:
\begin{definition} \label{definition:radical}
  An ordered $^*$\=/algebra $\mathcal{A}$ is called \emph{radical} if it has the following property:
  Whenever $a, b \in \mathcal{A}_\Hermitian$ commute and fulfil $\Unit \le a$ and $0 \le ab$, then $0 \le b$.
\end{definition}
There are some important examples of radical commutative ordered $^*$\=/algebras:
\begin{proposition} \label{proposition:functionsradical}
  Let $\mathcal{A}$ be a commutative ordered $^*$\=/algebra. If $\mathcal{A}$ has a
  faithful representation as functions, then $\mathcal{A}$ is radical and closed.
\end{proposition}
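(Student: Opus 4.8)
The plan is to exploit that the target algebra $\CC^X$ is itself both radical and Archimedean for the pointwise order on its real-valued (Hermitian) functions, and that a faithful representation, being an order embedding, pulls both properties back to $\mathcal{A}$. Let $(X,\pi)$ be a faithful representation as functions, so that $\pi \colon \mathcal{A} \to \CC^X$ is a positive unital $^*$-homomorphism which is an order embedding. In particular, for Hermitian $a$ the function $\pi(a)$ is real-valued, and the order-embedding property means precisely that for $c\in\mathcal{A}_\Hermitian$ one has $0\le c$ whenever $\pi(c)(x)\ge 0$ for all $x\in X$.

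For radicality I would take $a,b\in \mathcal{A}_\Hermitian$ with $\Unit \le a$ and $0\le ab$; the commutation hypothesis in Definition~\ref{definition:radical} is automatic here since $\mathcal{A}$ is commutative. Evaluating at an arbitrary $x\in X$ and using that $\pi$ is a unital $^*$-homomorphism, so $\pi(\Unit)(x)=1$ and $\pi(ab)(x)=\pi(a)(x)\,\pi(b)(x)$, the hypotheses become $\pi(a)(x)\ge 1$ and $\pi(a)(x)\,\pi(b)(x)\ge 0$. Since $\pi(a)(x)\ge 1>0$, this forces $\pi(b)(x)\ge 0$. As $x$ was arbitrary, $\pi(b)$ is pointwise nonnegative, and the order-embedding property yields $0\le b$.

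For the Archimedean property I would take $v\in\mathcal{A}_\Hermitian$ and $w\in\mathcal{A}_\Hermitian^+$ with $v\le \epsilon w$ for all $\epsilon \in {]0,\infty[}$, and again evaluate pointwise. Positivity and linearity of $\pi$ give $\pi(v)(x)\le \epsilon\,\pi(w)(x)$ for every $x\in X$ and every $\epsilon>0$, while $w\ge 0$ gives $\pi(w)(x)\ge 0$. Letting $\epsilon\to 0$ at each fixed $x$ forces $\pi(v)(x)\le 0$, so $\pi(v)$ is pointwise nonpositive; equivalently $\pi(-v)$ is pointwise nonnegative, and the order embedding yields $-v\ge 0$, i.e.\ $v\le 0$.

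There is no genuine obstacle here: the entire content is that $\CC^X$ with the pointwise order satisfies these two order-theoretic properties and that both descend along an order embedding. The only points requiring care are to invoke multiplicativity of $\pi$ in the radical step (so that $\pi(ab)=\pi(a)\pi(b)$ pointwise) and to use the order-reflecting direction of the embedding, $\pi(c)\ge 0 \Rightarrow c\ge 0$, rather than mere positivity, when drawing the final conclusions.
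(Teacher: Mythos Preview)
Your proof is correct and follows essentially the same approach as the paper: the paper simply asserts that $\CC^X$ is radical and Archimedean and that these properties are inherited by $\mathcal{A}$ via the order embedding $\pi$, while you spell out the pointwise verifications explicitly. There is no substantive difference in strategy.
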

\begin{proof}
  Let $(X,\pi)$ be a faithful represent as functions of $\mathcal{A}$. It is easy
  to check that $\CC^X$ is radical and closed, essentially because $\CC$ is radical and closed.
  Using that $\pi \colon \mathcal{A} \to \CC^X$ is a positive unital $^*$\=/homomorphism
  and an order embedding, it follows immediately that $\mathcal{A}$ has to be radical and closed
  as well.
\end{proof}
\begin{proposition} \label{proposition:phiIsradical}
  Let $\mathcal{A} \coloneqq \mathcal{R}\otimes \CC$ be the complexification
  of a $\Phi$\=/algebra $\mathcal{R}$, then $\mathcal{A}$ is radical.
\end{proposition}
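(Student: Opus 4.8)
The plan is to push everything down to the underlying $\Phi$\=/algebra $\mathcal{R}$. Since $\mathcal{A}_\Hermitian \cong \mathcal{R}$ as ordered real algebras and $\mathcal{R}$ is commutative, any two Hermitian elements commute automatically, so the commutativity hypothesis in Definition~\ref{definition:radical} is vacuous here and the assertion reduces to the following statement inside $\mathcal{R}$: if $a,b\in\mathcal{R}$ satisfy $e \le a$ and $0 \le ab$, where $e$ is the algebra unit corresponding to $\Unit$, then $0\le b$. First I would collect the elementary facts I intend to use, all immediate from the axioms of a $\Phi$\=/algebra: products of positive elements are positive, squares are positive because $r^2 = \abs{r}^2$, disjoint elements have vanishing product, multiplication by a positive element is order preserving, and $e = e^2 \ge 0$.

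The computational core rests on the Riesz decomposition $b = b_+ - b_-$ with $b_+ \wedge b_- = 0$, so that $b_+ b_- = 0$. Multiplying the hypothesis $0 \le ab$ by the positive element $b_-$ and discarding the vanishing term $a b_+ b_-$ yields $0 \le (ab)b_- = -a b_-^2$, hence $a b_-^2 \le 0$. On the other hand $(a-e)b_-^2 \ge 0$ is a product of positive elements, which rearranges to $a b_-^2 \ge b_-^2 \ge 0$. Combining the two squeezes $b_-^2$ between $0$ and itself, forcing $b_-^2 = 0$. Everything up to this point is routine bookkeeping; the one genuine obstacle is to pass from $b_-^2 = 0$ to $b_- = 0$, i.e.\ to show that a positive element with vanishing square already vanishes.

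To overcome this I would not invoke any external structure theory but instead feed a completing-the-square estimate into the Archimedean axiom. For arbitrary $\epsilon \in {]0,\infty[}$ set $y \coloneqq \epsilon^{-1} b_-$; using $e^2 = e$ and positivity of squares one has
\[
  y^2 - y + e = \Big( y - \tfrac{1}{2} e \Big)^2 + \tfrac{3}{4} e \ge 0 .
\]
Since $y^2 = \epsilon^{-2} b_-^2 = 0$, this collapses to $e - \epsilon^{-1} b_- \ge 0$, that is $b_- \le \epsilon e$. As this holds for every $\epsilon \in {]0,\infty[}$ and $e \in \mathcal{R}^+$, the Archimedean property gives $b_- \le 0$, hence $b_- = 0$ and $b = b_+ \ge 0$. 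Transporting this conclusion back through $\mathcal{R} \cong \mathcal{A}_\Hermitian$ then shows that $\mathcal{A}$ is radical.
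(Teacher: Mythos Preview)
Your proof is correct and follows essentially the same route as the paper's: decompose $b=b_+-b_-$, use $b_+b_-=0$ and the hypothesis to force $(b_-)^2=0$, then feed a completing-the-square identity into the Archimedean axiom to conclude $b_-\le\epsilon e$ for all $\epsilon>0$. The only cosmetic differences are that the paper writes the squeeze as $0\le (b_-)^2 \le b_-\,a\,b_- = 0$ in one line and uses the variant identity $2\epsilon(\epsilon\Unit-b_-)=(\epsilon\Unit-b_-)^2+\epsilon^2\Unit$ in place of your $(y-\tfrac{1}{2}e)^2+\tfrac{3}{4}e$.
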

\begin{proof}
  Consider $a, b \in \mathcal{A}_\Hermitian$ such that $\Unit \le a$ and $0 \le ab$.
  We can express $b$ as the difference $b = b_+ -b_-$ of its positive and negative component,
  and from $b_+ \wedge b_- = 0$ it follows that $b_+ b_- = 0$. As $\mathcal{A}^+_\Hermitian$
  is closed under products by definition of $\Phi$\=/algebras, multiplication with $b_-$
  yields $0 \le b_-\,ab = - b_-\,a\,b_- \le 0$, so $b_-\,a\,b_- = 0$.
  From $\Unit \le a$ it now follows that $0 \le (b_-)^2 \le b_-\,a\,b_- = 0$, so $(b_-)^2 = 0$.
  As a consequence, $2\epsilon(\epsilon\Unit - b_-) = (\epsilon\Unit-b_-)^2 + \epsilon^2\Unit \ge 0$,
  hence $b_- \le \epsilon\Unit$, holds for all $\epsilon \in {]0,\infty[}$, which implies 
  $b_- \le 0$ because $\mathcal{A}$ is closed by assumption. This finally shows that 
  $b = b_+-b_- \ge 0$.
\end{proof}
It is also worthwhile to mention an important non-example:
\begin{example}
  Consider the $^*$\=/algebra $\CC[x,y]$ of complex polynomials in two variables $x$ and $y$
  with the $^*$-involution given by complex conjugation of coefficients, thus
  $\CC[x,y]_\Hermitian \cong \RR[x,y]$. On $\CC[x,y]_\Hermitian$ choose the partial order
  that turns $\CC[x,y]$ into an ordered $^*$\=/algebra with cone of positive elements
  given by sums of squares, i.e.
  \begin{align}
    \CC[x,y]_\Hermitian^+
    \coloneqq
    \CC[x,y]_\Hermitian^{++}
    =
    \set[\Big]{\sum\nolimits_{n=1}^N p_n^*p_n}{N\in \NN;\,p_1,\dots,p_N \in \CC[x,y]}\,.
  \end{align}
  Note that the product of two elements of $\CC[x,y]_\Hermitian^+$ is again in $\CC[x,y]_\Hermitian^+$.
  It is well-known that there exist polynomials $p \in \CC[x,y]_\Hermitian \setminus \CC[x,y]_\Hermitian^+$
  which are still pointwise positive on $\RR^2$, i.e.\ $p(s,t) \ge 0$ for all $(s,t) \in \RR^2$.
  An explicit example from \cite{berg.christensen.jensen:RemarkOnTheMultidimensionalMomentProblem}
  is $p \coloneqq x^2y^2(x^2+y^2-\Unit)+\Unit = x^4y^2 + x^2y^4-x^2y^2+\Unit$.
  Now consider $q \coloneqq x^2+y^2+\Unit \in \CC[x,y]_\Hermitian^+$, then
  \begin{align*}
    pq = x^6y^2 + x^4y^4 + x^2y^6  + x^2y^2 + x^2 + y^2 + \big(x^2y^2-\Unit\big)^2 \in \CC[x,y]_\Hermitian^+
  \end{align*}
  is a sum of squares, and thus even $pq^n \in \CC[x,y]_\Hermitian^+$ for all $n\in \NN$,
  and especially $qpq \in \CC[x,y]_\Hermitian^+$. As $q^2 \ge \Unit$ we conclude that
  $\CC[x,y]$ (with this choice for the order) is not a radical ordered
  $^*$\=/algebra and especially does not have a
  faithful representation as functions due to Proposition~\ref{proposition:functionsradical}.
  A closer inspection shows that this is indeed because of the existence of
  ill-behaved pure states, see e.g.\ \cite[Cor.~11.6.4]{schmuedgen:UnboundedOperatorAlgebraAndRepresentationTheory}
  and the discussion there for details.
\end{example}
We proceed with examining the relation between pure states and multiplicative states:
\begin{lemma} \label{lemma:varvanishing}
  Let $\mathcal{A}$ be an ordered $^*$\=/algebra, $\omega$ a state on $\mathcal{A}$ and $a\in \mathcal{A}$
  with $\Var_\omega(a) = 0$, then
  \begin{align}
    \dupr{\omega}{b^*a} = \dupr{\omega}{b^*} \dupr{\omega}{a}
    \quad\quad\text{and}\quad\quad
    \dupr{\omega}{a^*b} = \dupr{\omega}{a^*} \dupr{\omega}{b}
  \end{align}
  hold for all $b\in \mathcal{A}$. A state $\omega$ on $\mathcal{A}$ thus is multiplicative if (and only if)
  $\Var_\omega(a) = 0$ for all $a\in \mathcal{A}$.
\end{lemma}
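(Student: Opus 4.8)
The plan is to reduce both identities to a single application of the Cauchy-Schwarz inequality recorded above. First I would set $c \coloneqq a - \dupr{\omega}{a}\Unit$ and note that by the very definition of the variance, $\Var_\omega(a) = \dupr{\omega}{c^*c} = \seminorm{\omega}{c}^2$, so the hypothesis $\Var_\omega(a) = 0$ says precisely that $c$ lies in the kernel of the GNS seminorm $\seminorm{\omega}{\argument}$. Applying Cauchy-Schwarz with $c$ in place of $a$ then gives, for every $b\in\mathcal{A}$,
\[
  \abs[\big]{\dupr{\omega}{b^*c}}^2 \le \dupr{\omega}{b^*b}\,\dupr{\omega}{c^*c} = 0,
\]
whence $\dupr{\omega}{b^*c} = 0$. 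Expanding $c$, using linearity and $b^*\Unit = b^*$, this rearranges to $\dupr{\omega}{b^*a} = \dupr{\omega}{b^*}\dupr{\omega}{a}$, which is the first identity.

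The second identity I would obtain from the first by complex conjugation, exploiting that $\omega$ is Hermitian: since $\omega^*=\omega$, one has $\dupr{\omega}{x^*} = \cc{\dupr{\omega}{x}}$ for all $x\in\mathcal{A}$. Taking the conjugate of $\dupr{\omega}{b^*a} = \dupr{\omega}{b^*}\dupr{\omega}{a}$ and rewriting $\cc{\dupr{\omega}{b^*a}} = \dupr{\omega}{(b^*a)^*} = \dupr{\omega}{a^*b}$, together with $\cc{\dupr{\omega}{b^*}} = \dupr{\omega}{b}$ and $\cc{\dupr{\omega}{a}} = \dupr{\omega}{a^*}$, yields the claimed formula $\dupr{\omega}{a^*b} = \dupr{\omega}{a^*}\dupr{\omega}{b}$.

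For the final equivalence, the ``only if'' direction is immediate: if $\omega$ is multiplicative, then $\dupr{\omega}{a^*a} = \dupr{\omega}{a^*}\dupr{\omega}{a} = \abs{\dupr{\omega}{a}}^2$, so the alternative expression $\Var_\omega(a) = \dupr{\omega}{a^*a} - \abs{\dupr{\omega}{a}}^2$ forces $\Var_\omega(a) = 0$. For the ``if'' direction, assuming $\Var_\omega(a) = 0$ for \emph{every} $a$, the first identity holds for all $a$ and all $b$; substituting $b = x^*$ (so that $b^* = x$) turns it into $\dupr{\omega}{xa} = \dupr{\omega}{x}\dupr{\omega}{a}$ for all $x,a\in\mathcal{A}$, which is exactly multiplicativity.

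I do not expect a genuine obstacle here, as everything is driven by the Cauchy-Schwarz inequality together with the vanishing of the seminorm on $c$. The only points requiring a little care are the bookkeeping with the involution when passing from the first identity to the second, and the observation that the substitution $b = x^*$ is legitimate precisely because $x$ (and hence $b$) ranges over all of $\mathcal{A}$ while $a$ ranges over all of $\mathcal{A}$ under the blanket hypothesis $\Var_\omega(a)=0$.
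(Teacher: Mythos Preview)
Your proof is correct and follows essentially the same route as the paper: both reduce the identities to the Cauchy-Schwarz inequality applied to the centered element $a-\dupr{\omega}{a}\Unit$. The only cosmetic difference is that the paper centers both $a$ and $b$ to obtain the symmetric bound $\abs{\dupr{\omega}{a^*b}-\dupr{\omega}{a^*}\dupr{\omega}{b}}^2 \le \Var_\omega(a)\Var_\omega(b)$, whereas you center only $a$; either version immediately yields the claim when $\Var_\omega(a)=0$.
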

\begin{proof}
  The Cauchy-Schwarz inequality yields
  \begin{align*}
    \abs[\big]{\dupr{\omega}{a^*b}-\dupr{\omega}{a^*}\dupr{\omega}{b}}^2
    =
    \abs[\big]{\dupr[\big]{\omega}{\big(a-\dupr{\omega}{a}\Unit\big)^*\big(b-\dupr{\omega}{b}\Unit\big)}}^2
    \le
    \Var_\omega(a) \Var_\omega(b)
  \end{align*}
  even for all $a,b\in \mathcal{A}$.
\end{proof}
\begin{lemma} \label{lemma:multsuff}
  Let $\mathcal{A}$ be a commutative ordered $^*$\=/algebra and $\omega \in \States(\mathcal{A})$, then
  $\set[\big]{a\in\mathcal{A}}{\Var_\omega(a)=0}$ is a unital $^*$\=/subalgebra of $\mathcal{A}$
  and is the largest (with respect to inclusion) unital $^*$\=/subalgebra of $\mathcal{A}$ on which
  the restriction of $\omega$ is multiplicative. In the special case that $\Var_\omega(\Unit+a^2) = 0$ holds for all $a\in\mathcal{A}_\Hermitian$
  it follows that $\omega$ is multiplicative on whole $\mathcal{A}$.
\end{lemma}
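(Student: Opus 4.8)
The plan is to write $\mathcal{N} \coloneqq \set[\big]{a\in\mathcal{A}}{\Var_\omega(a)=0}$ and to treat the three assertions in turn, relying throughout on the previous Lemma~\ref{lemma:varvanishing} together with the polarized Cauchy-Schwarz estimate $\abs[\big]{\dupr{\omega}{a^*b}-\dupr{\omega}{a^*}\dupr{\omega}{b}}^2 \le \Var_\omega(a)\Var_\omega(b)$ established in its proof, and on the identity $\Var_\omega(a) = \dupr{\omega}{a^*a} - \abs{\dupr{\omega}{a}}^2$.

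First I would verify that $\mathcal{N}$ is a unital $^*$-subalgebra. That $\Unit\in\mathcal{N}$ is immediate from $\dupr{\omega}{\Unit}=1$, and stability under scalar multiplication follows from $\Var_\omega(\lambda a) = \abs{\lambda}^2 \Var_\omega(a)$. Stability under $^*$ is where commutativity enters for the first time: from $a^*a = aa^*$ and $\dupr{\omega}{a^*} = \cc{\dupr{\omega}{a}}$ (the latter because the state $\omega$ is Hermitian) one reads off $\Var_\omega(a^*) = \Var_\omega(a)$. For additive stability I would expand $\Var_\omega(a+b)$ into the two variances plus the two cross terms $\dupr{\omega}{a^*b}-\dupr{\omega}{a^*}\dupr{\omega}{b}$ and its complex conjugate; each cross term vanishes by the polarized Cauchy-Schwarz estimate as soon as $\Var_\omega(a)=\Var_\omega(b)=0$.

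The main work is closure under products, and this is the step I expect to be the only genuinely delicate one. Given $a,b\in\mathcal{N}$ — so that, by the $^*$-stability just shown, $a^*$ also lies in $\mathcal{N}$ — Lemma~\ref{lemma:varvanishing} supplies the factorizations $\dupr{\omega}{a^*c}=\dupr{\omega}{a^*}\dupr{\omega}{c}$ (from $a\in\mathcal{N}$) and $\dupr{\omega}{ac}=\dupr{\omega}{a}\dupr{\omega}{c}$ (from $a^*\in\mathcal{N}$) for every $c\in\mathcal{A}$, and likewise for $b$. I would then reorder $(ab)^*(ab)=b^*a^*ab = a^*\,(a\,b^*b)$ using commutativity and peel off the factors one at a time, arriving at $\dupr{\omega}{(ab)^*(ab)} = \abs{\dupr{\omega}{a}}^2\abs{\dupr{\omega}{b}}^2$; since the same factorizations give $\dupr{\omega}{ab}=\dupr{\omega}{a}\dupr{\omega}{b}$, this equals $\abs{\dupr{\omega}{ab}}^2$, whence $\Var_\omega(ab)=0$. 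That identity $\dupr{\omega}{ab}=\dupr{\omega}{a}\dupr{\omega}{b}$ also records that $\omega$ restricts to a multiplicative functional on $\mathcal{N}$; and if $\mathcal{B}$ is any unital $^*$-subalgebra on which $\omega$ is multiplicative, then for $a\in\mathcal{B}$ one has $\dupr{\omega}{a^*a}=\dupr{\omega}{a^*}\dupr{\omega}{a}=\abs{\dupr{\omega}{a}}^2$, i.e.\ $\Var_\omega(a)=0$, so $\mathcal{B}\subseteq\mathcal{N}$. This establishes the maximality claim.

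For the final statement I would argue that the hypothesis forces $\mathcal{N}=\mathcal{A}$, which by the last sentence of Lemma~\ref{lemma:varvanishing} is precisely multiplicativity of $\omega$. The assumption says $\Unit+b^2\in\mathcal{N}$ for every $b\in\mathcal{A}_\Hermitian$. Applying this to $b=a$ and to the Hermitian element $b=\Unit+a$, expanding $(\Unit+a)^2=\Unit+2a+a^2$, and subtracting off the multiples of $\Unit$ and of $a^2$ — which already lie in the subspace $\mathcal{N}$ — isolates $2a\in\mathcal{N}$, hence $a\in\mathcal{N}$, for every Hermitian $a$. Since $\mathcal{N}$ is a complex subspace and every element of $\mathcal{A}$ is the sum of a Hermitian element and $\I$ times a Hermitian element, $\mathcal{N}=\mathcal{A}$ follows, completing the proof.
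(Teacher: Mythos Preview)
Your argument is correct and follows essentially the same route as the paper: verify $\Unit$, scalars, $^*$, sums, and products lie in $\mathcal{N}$ using Lemma~\ref{lemma:varvanishing} together with commutativity, deduce multiplicativity on $\mathcal{N}$ and maximality exactly as you do, and then reduce the final statement to $\mathcal{N}=\mathcal{A}$ via a polarization trick on Hermitian elements. The only cosmetic difference is that the paper uses the identity $4b = \big(\Unit+(b+\Unit)^2\big) - \big(\Unit+(b-\Unit)^2\big)$ in place of your combination of $b=a$ and $b=\Unit+a$; both isolate the Hermitian element as a linear combination of elements of the form $\Unit+(\text{Hermitian})^2$.
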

\begin{proof}
  First assume that $\omega$ is an arbitrary state on $\mathcal{A}$. Then it is easy to check that
  $\Var_\omega(\lambda a) = \abs{\lambda}^2 \Var_\omega(a)$ and also (using the commutativity of $\mathcal{A}$)
  $\Var_\omega(a^*) = \Var_\omega(a)$ hold for all $a\in\mathcal{A}$ and all $\lambda \in \CC$.
  Moreover, if $a,b\in \mathcal{A}$ fulfil $\Var_\omega(a) = \Var_\omega(b) = 0$, then one can check
  with the help of the previous Lemma~\ref{lemma:varvanishing} that
  \begin{align*}
    \Var_\omega(a+b)
    &=
    \dupr[\big]{\omega}{a^*a+a^*b +b^*a+b^*b} - \abs[\big]{\dupr{\omega}{a} + \dupr{\omega}{b}}^2
    =
    0
    \shortintertext{and}
    \Var_\omega(ab)
    &=
    \dupr[\big]{\omega}{b^*a^*a\,b} - \abs[\big]{\dupr[\big]{\omega}{ab}}^2
    =
    0\,.
  \end{align*}
  As $\Var_\omega(\Unit) = 0$ is clearly fulfilled as well,
  one sees that $\set[\big]{a\in\mathcal{A}}{\Var_\omega(a)=0}$ is a unital $^*$\=/subalgebra
  of $\mathcal{A}$.
  From Lemma~\ref{lemma:varvanishing} it also follows that the restriction of $\omega$
  to $\set[\big]{a\in\mathcal{A}}{\Var_\omega(a)=0}$ is multiplicative. Conversely,
  if $\mathcal{B}$ is another unital $^*$\=/subalgebra of $\mathcal{A}$ such that
  the restriction of $\omega$ to $\mathcal{B}$ is multiplicative, then it follows that
  $\Var_\omega(b) = \dupr{\omega}{b^*b} - \abs{\dupr{\omega}{b}}^2 = 0$
  for all $b\in \mathcal{B}$ and thus $\mathcal{B} \subseteq \set[\big]{a\in\mathcal{A}}{\Var_\omega(a)=0}$.

  Now assume that $\Var_\omega(\Unit+a^2) = 0$ holds for all $a\in\mathcal{A}_\Hermitian$.
  As $4b = \big(\Unit + (b+\Unit)^2 \big) - \big(\Unit + (b-\Unit)^2 \big)$
  holds for all $b\in \mathcal{A}_\Hermitian$, it follows from
  $\Var_\omega\big( \Unit + (b\pm\Unit)^2 \big) = 0$ that $\Var_\omega(b) = 0$
  for all $b\in \mathcal{A}_\Hermitian$, hence even $\Var_\omega(c) = 0$ for all
  $c\in \mathcal{A}$ because $c = c_r+\I c_i$ with $c_r \coloneqq (c+c^*)/2 \in \mathcal{A}_\Hermitian$
  and $c_i \coloneqq (c-c^*)/(2\I) \in \mathcal{A}_\Hermitian$. Application of
  Lemma~\ref{lemma:varvanishing} thus shows that $\omega$ is multiplicative
  on whole $\mathcal{A}$.
\end{proof}
\begin{theorem} \label{theorem:pureIsmultiplicative}
  Let $\mathcal{A}$ be a radical commutative ordered $^*$\=/algebra, then $\PureStates(\mathcal{A}) = \MultStates(\mathcal{A})$,
  i.e.\ a state on $\mathcal{A}$ is a pure state if and only if it is multiplicative.
\end{theorem}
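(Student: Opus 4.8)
The plan is to prove the two set inclusions separately. One of them is already in hand: by Proposition~\ref{proposition:charactersAreExtreme} every multiplicative state is pure, so $\MultStates(\mathcal{A}) \subseteq \PureStates(\mathcal{A})$. The real content is the reverse inclusion, and the strategy there is to take a pure state $\omega$ and verify the hypothesis of Lemma~\ref{lemma:multsuff}, i.e.\ that $\Var_\omega(\Unit + a^2) = 0$ for every $a \in \mathcal{A}_\Hermitian$; this is the only place where radicality (Definition~\ref{definition:radical}) and extremality of $\omega$ get used.

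Fix such an $a$ and put $c := \Unit + a^2$. Since $a^2 = a^*a \in \mathcal{A}^+_\Hermitian$ we have $\Unit \le c$, which is exactly the hypothesis under which radicality applies (and all elements commute, as $\mathcal{A}$ is commutative). First I would extract from radicality the two facts that, for $b \in \mathcal{A}_\Hermitian$, one has $cb \in \mathcal{A}^+_\Hermitian$ if and only if $b \in \mathcal{A}^+_\Hermitian$ --- the forward direction is radicality, the backward one follows from $cb = b + a^*ba$ --- and that multiplication by $c$ is injective on $\mathcal{A}_\Hermitian$, since $cb = 0$ forces both $b \ge 0$ and $-b \ge 0$. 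Injectivity makes $\tilde\sigma(cb) := \dupr{\omega}{b}$ a well-defined linear functional on the subspace $S := c\,\mathcal{A}_\Hermitian$, and the equivalence above makes it positive.

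Next I would extend $\tilde\sigma$ to a positive Hermitian linear functional on all of $\mathcal{A}_\Hermitian$ by the Extension Theorem, whose hypothesis is readily checked: given $v \in \mathcal{A}_\Hermitian$, directedness provides $w \in \mathcal{A}^+_\Hermitian$ with $v \le w$, and then $s := cw$ lies in $S$, is positive, and dominates $v$ because $cw - w = a^*wa \ge 0$. This yields $\sigma \in \mathcal{A}^{*,+}_\Hermitian$ with $\dupr{\sigma}{cb} = \dupr{\omega}{b}$ for all $b \in \mathcal{A}_\Hermitian$.

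The step I expect to be the crux is arranging a functional below $\omega$ so that extremality can be invoked, and here I would guard against reaching prematurely for a dominated-extension theorem: the domination $\sigma \le \omega$ turns out to be automatic, because for $b \in \mathcal{A}^+_\Hermitian$ one has $\dupr{\omega}{b} - \dupr{\sigma}{b} = \dupr{\sigma}{cb} - \dupr{\sigma}{b} = \dupr{\sigma}{a^*ba} \ge 0$ by positivity of $\sigma$ and $a^*ba \in \mathcal{A}^+_\Hermitian$. Extremality of $\omega$ then gives $\mu \in [0,1]$ with $\sigma = \mu\omega$. Evaluating the resulting identity $\mu\dupr{\omega}{cb} = \dupr{\omega}{b}$ at $b = \Unit$ yields $\mu\dupr{\omega}{c} = 1$ (so $\mu > 0$, as $\dupr{\omega}{c} \ge \dupr{\omega}{\Unit} = 1$), and at $b = c$ it yields $\dupr{\omega}{c^2} = \mu^{-1}\dupr{\omega}{c} = \dupr{\omega}{c}^2$, i.e.\ $\Var_\omega(c) = 0$. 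Since $a$ was arbitrary, Lemma~\ref{lemma:multsuff} makes $\omega$ multiplicative, which establishes $\PureStates(\mathcal{A}) \subseteq \MultStates(\mathcal{A})$ and finishes the proof.
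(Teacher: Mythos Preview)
Your proof is correct and follows essentially the same route as the paper's: reduce to showing $\Var_\omega(\Unit+a^2)=0$ via Lemma~\ref{lemma:multsuff}, use radicality to make the functional $\tilde\sigma\big((\Unit+a^2)b\big)=\dupr{\omega}{b}$ well-defined and positive on $S=(\Unit+a^2)\mathcal{A}_\Hermitian$, extend it by the Extension Theorem, observe $\sigma\le\omega$, invoke extremality to get $\sigma=\mu\omega$, and evaluate at $b=\Unit$ and $b=\Unit+a^2$. The only differences are cosmetic---your dominating element $cw$ with $w\ge v$ is a bit slicker than the paper's choice $(\Unit+a^2)(\Unit+v)^2/2$, and you phrase the domination $\sigma\le\omega$ via $\dupr{\sigma}{a^*ba}\ge 0$ rather than $\dupr{\rho}{b}\le\dupr{\rho}{(\Unit+a^2)b}$, but the substance is identical.
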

\begin{proof}
  Let $\mathcal{\omega}$ be a state on $\mathcal{A}$. If $\omega$ is multiplicative, then
  Proposition~\ref{proposition:charactersAreExtreme} shows that $\omega$ is also a pure state.
  Conversely, if $\omega$ is a pure state, then it remains to show that $\Var_\omega(a^2 + \Unit) = 0$
  for all $a\in \mathcal{A}_\Hermitian$, which, by  the previous Lemma~\ref{lemma:multsuff},
  already implies that $\omega$ is multiplicative.

  So let $a\in\mathcal{A}_\Hermitian$ be given and define the subset
  $S_a \coloneqq \set[\big]{(\Unit + a^2)b}{b\in\mathcal{A}_\Hermitian}$
  of $\mathcal{A}_\Hermitian$. It is clear that $S_a$ is a (real) linear subspace
  of $\mathcal{A}_\Hermitian$. If $(\Unit+a^2)b = (\Unit+a^2)b'$ with $b,b' \in \mathcal{A}_\Hermitian$,
  then $b-b'=0$ because
  $0 = (\Unit+a^2)(b-b') = (a+\I\Unit)(b-b')(a+\I\Unit)^*$ and because $\mathcal{A}$ is radical.
  So every element of $S_a$ is of the
  form $(\Unit+a^2) b$ with a unique $b\in\mathcal{A}_\Hermitian$ and
  the map $\tilde{\rho} \colon S_{a} \to \RR$,
  \begin{align*}
    (\Unit + a^2)b \mapsto \dupr[\big]{\tilde{\rho}}{(\Unit + a^2)b} \coloneqq \dupr{\omega}{b}
  \end{align*}
  is well-defined and is clearly $\RR$-linear. Moreover, for every
  $c\in \mathcal{A}_\Hermitian$ there exists a $(\Unit+a^2)b \in S_a$ with $b\in\mathcal{A}_\Hermitian$ such
  that $0 \le (\Unit+a^2)b$ and $c \le (\Unit+a^2)b$, e.g.\ $(\Unit+a^2)b \coloneqq (\Unit+a^2)(\Unit+c)^2/2$.
  Using again that $\mathcal{A}$ is radical one sees that $\tilde{\rho}$
  is even positive with respect to the order on $S_a$ inherited
  from $\mathcal{A}_\Hermitian$, so the extension theorem for positive linear functionals
  applies and shows that there exists a positive linear functional $\rho$ on $\mathcal{A}_\Hermitian$
  fulfilling $\dupr{\rho}{(\Unit+a^2)b} = \dupr{\omega}{b}$ for all $b\in\mathcal{A}_\Hermitian$.
  Using the isomorphism between $(\mathcal{A}_\Hermitian)^*$ and $\mathcal{A}_\Hermitian^*$
  we can also treat $\rho$ as a positive Hermitian linear functional on $\mathcal{A}$.

  As $\dupr{\rho}{b} \le \dupr[\big]{\rho}{(\Unit+a^2)b} = \dupr{\omega}{b}$ holds
  for all $b\in\mathcal{A}_\Hermitian$ it follows that $\rho \le \omega$, hence
  there exists $\mu \in {[0,1]}$ such that $\rho = \mu \omega$ because $\omega$ is
  a pure state by assumption.
  From evaluation on $\Unit+a^2$ and $(\Unit+a^2)^2$ one gets
  \begin{align*}
    \mu\, \dupr[\big]{\omega}{\Unit+a^2} &= \dupr[\big]{\rho}{\Unit+a^2} = \dupr{\omega}{\Unit} = 1
    \shortintertext{and}
    \mu\, \dupr[\big]{\omega}{(\Unit+a^2)^2} &= \dupr[\big]{\rho}{\big(\Unit+a^2\big)^2} = \dupr[\big]{\omega}{\Unit+a^2}\,,
  \end{align*}
  which yields $\mu \neq 0$ and $\dupr[\big]{\omega}{\Unit+a^2} = \mu^{-1}$
  as well as $\dupr[\big]{\omega}{(\Unit+a^2)^2} = \mu^{-2}$, thus $\Var_\omega(\Unit+a^2) = 0$.
\end{proof}
Similar results about the relation between pure and multiplicative states on certain
$^*$\=/algebras have already occured before, e.g.\ \cite[Thm.~2]{bucy.maltese:RepresentationTheoremForPositiveFunctionals}
for Banach $^*$\=/algebras or \cite[Prop.~11.3.9]{schmuedgen:UnboundedOperatorAlgebraAndRepresentationTheory}
for general $^*$\=/algebras endowed with a special choice of a (pre-)order.
\begin{corollary}
  Let $X$ be a set and $\mathcal{A}$ a unital $^*$\=/subalgebra of $\CC^X$, endowed with
  the pointwise order inherited from $\CC^X$, then the pure states on $\mathcal{A}$ are
  precisely the multiplicative ones.
\end{corollary}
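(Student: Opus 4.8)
The plan is to recognize this as a direct instance of Theorem~\ref{theorem:pureIsmultiplicative}, so that the only work is to verify that $\mathcal{A}$, with the inherited pointwise order, is a radical commutative ordered $^*$\=/algebra. Commutativity is immediate, since $\mathcal{A}$ is a subalgebra of the commutative $^*$\=/algebra $\CC^X$, and $\mathcal{A}$ is an ordered $^*$\=/algebra because it is a unital $^*$\=/subalgebra of the ordered $^*$\=/algebra $\CC^X$ (the first example in this section) equipped with the order inherited from it. Thus the hypotheses of Theorem~\ref{theorem:pureIsmultiplicative} will be in force as soon as radicality is established.

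First I would exhibit a faithful representation as functions of $\mathcal{A}$, namely the tuple $(X,\iota)$ where $\iota \colon \mathcal{A} \to \CC^X$ is the inclusion map. This $\iota$ is manifestly a unital $^*$\=/homomorphism, and it is positive because, by the very definition of the inherited order, every $a\in\mathcal{A}^+_\Hermitian$ is pointwise nonnegative, i.e.\ $\iota(a) \in (\CC^X)^+_\Hermitian$. Moreover $\iota$ is an order embedding: if $a \in \mathcal{A}_\Hermitian \setminus \mathcal{A}^+_\Hermitian$, then again by definition of the inherited order $a$ fails to be pointwise nonnegative, so $\iota(a) \in (\CC^X)_\Hermitian \setminus (\CC^X)^+_\Hermitian$. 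By the characterization of order embeddings recalled in Section~\ref{sec:order}, this shows $\iota$ is an order embedding, so $(X,\iota)$ is faithful.

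With a faithful representation as functions in hand, Proposition~\ref{proposition:functionsradical} shows that the commutative ordered $^*$\=/algebra $\mathcal{A}$ is radical (and Archimedean). Theorem~\ref{theorem:pureIsmultiplicative} then applies and yields $\PureStates(\mathcal{A}) = \MultStates(\mathcal{A})$, which is exactly the assertion. I do not expect any genuine obstacle here: the entire content has been packaged into the earlier results, and the single point requiring a line of verification is that the inclusion $\iota$ is an order embedding, which is forced by the definition of the inherited pointwise order.
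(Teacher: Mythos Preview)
Your proposal is correct and follows exactly the route the paper intends: the corollary is stated without proof precisely because it is an immediate application of Theorem~\ref{theorem:pureIsmultiplicative}, and the only thing to check is radicality, which your argument via the inclusion $\iota$ and Proposition~\ref{proposition:functionsradical} handles cleanly.
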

Note, however, that there might be more multiplicative states on $\mathcal{A}$ than just evaluation
functionals at points of $X$.
\begin{corollary} \label{corollary:extremalclosed}
  Let $\mathcal{A}$ be a radical commutative ordered $^*$\=/algebra, then
  \begin{align}
    \mathcal{A}^{*,+,\ex}_\Hermitian
    =
    \set[\big]{
      \omega \in \mathcal{A}^{*,+}_\Hermitian
    }{
      \forall_{a\in\mathcal{A}}: \dupr{\omega}{\Unit}\dupr{\omega}{a^*a} = \abs{\dupr{\omega}{a}}^2
    }
    \label{eq:extremalonstarlag}
  \end{align}
  and the set $\mathcal{A}^{*,+,\ex}_\Hermitian$ of extremal positive Hermitian linear
  functionals on $\mathcal{A}$ is weak\=/$^*$-closed in $\mathcal{A}_\Hermitian^*$.
\end{corollary}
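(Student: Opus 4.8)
The plan is to read the right-hand side of \eqref{eq:extremalonstarlag} as the condition $\Var_\omega(a)=0$ for all $a$, up to the normalisation at $\Unit$, and then to translate between normalised and unnormalised functionals. First I would dispose of the trivial functional: since $0\in\mathcal{A}^{*,+,\ex}_\Hermitian$ and $0$ clearly satisfies the defining equation of the right-hand side, $0$ lies in both sets. For $\omega\in\mathcal{A}^{*,+}_\Hermitian\setminus\{0\}$ one has $\dupr{\omega}{\Unit}>0$, because $\dupr{\omega}{\Unit}=0$ would force $\omega=0$ by the Cauchy-Schwarz inequality; thus $\omega$ is a positive multiple $\omega=\dupr{\omega}{\Unit}\,\sigma$ of the unique state $\sigma\coloneqq\dupr{\omega}{\Unit}^{-1}\omega$.

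The key observation is that, writing $\lambda\coloneqq\dupr{\omega}{\Unit}>0$, the identity $\dupr{\omega}{\Unit}\dupr{\omega}{a^*a}=\abs{\dupr{\omega}{a}}^2$ for all $a\in\mathcal{A}$ is, after dividing by $\lambda^2$, precisely the statement that $\dupr{\sigma}{a^*a}=\abs{\dupr{\sigma}{a}}^2$, i.e.\ that $\Var_\sigma(a)=0$, for all $a$. By Lemma~\ref{lemma:varvanishing} this holds if and only if the state $\sigma$ is multiplicative, and by Theorem~\ref{theorem:pureIsmultiplicative} (using that $\mathcal{A}$ is radical and commutative) this is in turn equivalent to $\sigma$ being a pure state, i.e.\ to $\sigma\in\mathcal{A}^{*,+,\ex}_\Hermitian$. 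Since the extremal positive Hermitian functionals are closed under multiplication by positive scalars, $\sigma\in\mathcal{A}^{*,+,\ex}_\Hermitian$ is equivalent to $\omega=\lambda\sigma\in\mathcal{A}^{*,+,\ex}_\Hermitian$. Running this chain of equivalences in both directions establishes the set equality \eqref{eq:extremalonstarlag}; note that both implications of Theorem~\ref{theorem:pureIsmultiplicative} are needed, one for each inclusion.

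Finally, the closedness is immediate once the equality is known, since the right-hand side of \eqref{eq:extremalonstarlag} is manifestly weak-$^*$-closed: the cone $\mathcal{A}^{*,+}_\Hermitian$ is an intersection of the weak-$^*$-closed half-spaces $\set[\big]{\omega}{\dupr{\omega}{a}\ge 0}$ taken over $a\in\mathcal{A}^+_\Hermitian$, and for each fixed $a\in\mathcal{A}$ the map $\omega\mapsto\dupr{\omega}{\Unit}\dupr{\omega}{a^*a}-\abs{\dupr{\omega}{a}}^2$ is weak-$^*$-continuous, so its zero set is weak-$^*$-closed; the right-hand side is the intersection of all these closed sets. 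I expect no serious obstacle here: the only genuine content is Theorem~\ref{theorem:pureIsmultiplicative}, and the single point requiring care is the bookkeeping of the normalisation factor $\dupr{\omega}{\Unit}$ that converts the homogeneous degree-two identity in \eqref{eq:extremalonstarlag} into the variance condition for the associated state.
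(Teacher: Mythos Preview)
Your proposal is correct and follows essentially the same approach as the paper: dispose of $\omega=0$ separately, normalise a nonzero $\omega$ to a state $\sigma$, observe that the degree-two identity in \eqref{eq:extremalonstarlag} becomes $\Var_\sigma(a)=0$ for all $a$, and then invoke Lemma~\ref{lemma:varvanishing} and Theorem~\ref{theorem:pureIsmultiplicative} to identify this with $\sigma$ (equivalently $\omega$) being extremal; closedness is read off from weak-$^*$-continuity of the defining constraints. The paper's proof is structured identically, only phrasing the normalisation step via the formula $\dupr{\omega}{\Unit}^2\Var_{\tilde\omega}(a)=\dupr{\omega}{\Unit}\dupr{\omega}{a^*a}-\abs{\dupr{\omega}{a}}^2$.
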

\begin{proof}
  Let $\omega$ be a positive Hermitian linear functional on $\mathcal{A}$. Then
  either $\omega = 0$, in which case $\omega \in \mathcal{A}^{*,+,\ex}_\Hermitian$ and
  $\dupr{\omega}{\Unit}\dupr{\omega}{a^*a} = \abs{\dupr{\omega}{a}}^2$
  for all $a\in\mathcal{A}$ hold, or $\dupr{\omega}{\Unit}>0$. In this second case,
  $\tilde{\omega} \coloneqq \dupr{\omega}{\Unit}^{-1} \omega$ is a state on
  $\mathcal{A}$ and the following chain of equivalences holds:
  $\omega$ is even an extremal positive Hermitian linear functional if and only if
  $\tilde{\omega}$ is a pure state, which is equivalent to $\tilde{\omega}$
  being multiplicative by the Theorem~\ref{theorem:pureIsmultiplicative},
  and Lemma~\ref{lemma:varvanishing} shows that this holds if and only if
  $\Var_{\tilde{\omega}}(a) = 0$ for all $a\in \mathcal{A}$. As
  $\dupr{\omega}{\Unit}^2 \Var_{\tilde{\omega}}(a) =
  \dupr{\omega}{\Unit}\dupr{\omega}{a^*a} - \abs{\dupr{\omega}{a}}^2$, identity
  \eqref{eq:extremalonstarlag} is proven.

  Finally, as $\mathcal{A}^{*,+}_\Hermitian$ is weak\=/$^*$-closed in $\mathcal{A}^*_\Hermitian$
  and as
  $\mathcal{A}^*_\Hermitian \ni \omega \mapsto \dupr{\omega}{\Unit}\dupr{\omega}{a^*a} - \abs{\dupr{\omega}{a}}^2 \in \RR$
  is a weak\=/$^*$-continuous function, one sees that the right hand side of
  \eqref{eq:extremalonstarlag} is weak\=/$^*$-closed in $\mathcal{A}^*_\Hermitian$.
\end{proof}
\begin{corollary} \label{corollary:asfunctions}
  Let $\mathcal{A}$ be a radical commutative ordered $^*$\=/algebra, then the following is equivalent:
  \begin{enumerate}
    \item The Gelfand transformation $\big( \MultStates(\mathcal{A}), \pi_{\mathrm{Gelfand}} \big)$
    of $\mathcal{A}$ discussed in Proposition~\ref{proposition:Gelfand}
    is a faithful representation as functions. \label{item:asfunctions:Gelfand}
    \item There exists a faithful representation as functions of $\mathcal{A}$.
    \label{item:asfunctions:rep}
    \item The order on $\mathcal{A}$ is induced by its extremal positive Hermitian linear functionals.
    \label{item:asfunctions:extremal}
  \end{enumerate}
\end{corollary}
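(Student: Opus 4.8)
The plan is to establish the cycle of implications \refitem{item:asfunctions:Gelfand} $\Longrightarrow$ \refitem{item:asfunctions:rep} $\Longrightarrow$ \refitem{item:asfunctions:extremal} $\Longrightarrow$ \refitem{item:asfunctions:Gelfand}. The first implication is immediate: by Proposition~\ref{proposition:Gelfand} the Gelfand transformation is always a representation as functions, so if it happens to be faithful then a faithful representation as functions certainly exists.

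For the implication \refitem{item:asfunctions:rep} $\Longrightarrow$ \refitem{item:asfunctions:extremal} I would start from a faithful representation as functions $(X,\pi)$ and argue as in the proof of Proposition~\ref{proposition:asoperators}: given $a\in\mathcal{A}_\Hermitian\setminus\mathcal{A}_\Hermitian^+$, faithfulness (the order-embedding property of $\pi$) yields a point $x\in X$ with $\pi(a)(x)<0$. The evaluation map $\mathcal{A}\ni b\mapsto\pi(b)(x)\in\CC$ is a positive unital $^*$\=/homomorphism to $\CC$, hence a multiplicative state, and by Proposition~\ref{proposition:charactersAreExtreme} it is a pure state and in particular lies in $\mathcal{A}^{*,+,\ex}_\Hermitian$. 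Since it sends $a$ to a negative number, the order on $\mathcal{A}$ is induced by its extremal positive Hermitian linear functionals.

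The remaining implication \refitem{item:asfunctions:extremal} $\Longrightarrow$ \refitem{item:asfunctions:Gelfand} is where radicality is needed and is the crux of the argument. Given $a\in\mathcal{A}_\Hermitian\setminus\mathcal{A}_\Hermitian^+$, the hypothesis provides an $\omega\in\mathcal{A}^{*,+,\ex}_\Hermitian$ with $\dupr{\omega}{a}<0$; in particular $\omega\neq 0$, so the Cauchy-Schwarz inequality forces $\dupr{\omega}{\Unit}>0$ and $\tilde{\omega}\coloneqq\dupr{\omega}{\Unit}^{-1}\omega$ is a state that is again extremal, i.e.\ a pure state. At this point Theorem~\ref{theorem:pureIsmultiplicative} enters decisively: since $\mathcal{A}$ is radical and commutative, every pure state is multiplicative, so $\tilde{\omega}\in\MultStates(\mathcal{A})$. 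Then $\pi_{\mathrm{Gelfand}}(a)(\tilde{\omega})=\dupr{\tilde{\omega}}{a}=\dupr{\omega}{\Unit}^{-1}\dupr{\omega}{a}<0$, which shows that $\pi_{\mathrm{Gelfand}}(a)$ is not a positive function. Thus the positive unital $^*$\=/homomorphism $\pi_{\mathrm{Gelfand}}$ maps every element of $\mathcal{A}_\Hermitian\setminus\mathcal{A}_\Hermitian^+$ outside the positive cone, hence is an order embedding and the Gelfand transformation is faithful. The only genuine subtlety is the normalization from an extremal positive functional to a pure state together with the invocation of Theorem~\ref{theorem:pureIsmultiplicative}; everything else is straightforward bookkeeping with the definitions.
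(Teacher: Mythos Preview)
Your proof is correct and follows essentially the same route as the paper: the same cycle of implications, the same use of evaluation functionals together with Proposition~\ref{proposition:charactersAreExtreme} for \refitem{item:asfunctions:rep} $\Longrightarrow$ \refitem{item:asfunctions:extremal}, and the same normalization of an extremal positive functional to a pure state followed by Theorem~\ref{theorem:pureIsmultiplicative} for \refitem{item:asfunctions:extremal} $\Longrightarrow$ \refitem{item:asfunctions:Gelfand}. The only cosmetic difference is that the paper's notation swaps the roles of $\omega$ and $\tilde{\omega}$.
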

\begin{proof}
  The implication \refitem{item:asfunctions:Gelfand} $\Longrightarrow$ \refitem{item:asfunctions:rep}
  is trivial.

  Assume that there exists a faithful representation as functions $(X,\pi)$ of $\mathcal{A}$
  and let $a\in \mathcal{A}_\Hermitian \setminus \mathcal{A}_\Hermitian^+$ be given. Then
  there exists an $x\in X$ fulfilling $\pi(a)(x) < 0$. But the linear functional
  $\mathcal{A} \ni b \mapsto \pi(b)(x) \in \CC$ is a multiplicative state, hence a pure state by
  Proposition~\ref{proposition:charactersAreExtreme}, and thus especially an extremal positive Hermitian linear
  functional on $\mathcal{A}$. This proves the implication \refitem{item:asfunctions:rep} $\Longrightarrow$
  \refitem{item:asfunctions:extremal}.

  Finally, if the order on $\mathcal{A}$ is induced by its extremal positive Hermitian linear functionals,
  then for every $a\in \mathcal{A}_\Hermitian \setminus \mathcal{A}_\Hermitian^+$ there exists an
  extremal positive Hermitian linear functional $\tilde{\omega}$ on $\mathcal{A}$ such that $\dupr{\tilde{\omega}}{a} < 0$.
  From $\tilde{\omega} \neq 0$ it follows that $\dupr{\tilde{\omega}}{\Unit} > 0$ and therefore
  $\omega \coloneqq \dupr{\tilde{\omega}}{\Unit}^{-1} \tilde{\omega}$ is a well-defined pure
  state on $\mathcal{A}$, thus also a multiplicative state by Theorem~\ref{theorem:pureIsmultiplicative}.
  So we see that $\pi_{\mathrm{Gelfand}}(a)$ cannot be positive because $\pi_{\mathrm{Gelfand}}(a)(\omega) < 0$,
  and conclude that the Gelfand transformation of $\mathcal{A}$ is faithful,
  i.e.\ \refitem{item:asfunctions:extremal} $\Longrightarrow$ \refitem{item:asfunctions:Gelfand} holds.
\end{proof}
There are some representation theorems for commutative $^*$\=/algebras endowed with a locally convex topology
defined by submultiplicative seminorms, i.e.\ seminorms fulfilling the estimate
$\seminorm{}{ab} \le \seminorm{}{a}\seminorm{}{b}$ for all elements $a$ and $b$ of the algebra.
One example is of course the commutative Gelfand--Naimark theorem. However, for radical commutative
ordered $^*$\=/algebras, the above Corollary~\ref{corollary:asfunctions} combined with
Corollary~\ref{corollary:extremal} yields an approach using a rather different type of locally convex
topologies.

Application of Theorem~\ref{theorem:orderedvs} to the above Corollary~\ref{corollary:asfunctions} immediately yields the following
generalization of the commutative Gelfand--Naimark theorem:

\begin{theorem} \label{theorem:asfunctions}
  Let $\mathcal{A}$ be a $\sigma$\=/bounded radical commutative ordered $^*$\=/algebra, then
  $\mathcal{A}$ has a faithful representation as functions if and only if $\mathcal{A}$ is closed.
\end{theorem}
Like before, this implies that, heuristically, every closed radical commutative ordered $^*$\=/algebra ``behaves essentially like a $^*$\=/algebra of functions'':
\begin{corollary}
  Let $\mathcal{A}$ be a closed radical commutative ordered $^*$\=/algebra and $(a_n)_{n\in \NN}$ any sequence in $\mathcal{A}$.
  Then the unital $^*$\=/subalgebra of $\mathcal{A}$ that is generated by $\set{a_n}{n \in \NN}$
  has a faithful representation as functions.
\end{corollary}
\begin{proof}
  By Lemma~\ref{lemma:countablydominatedautomatic}, the unital $^*$\=/subalgebra of
  $\mathcal{A}$ that is generated by $\set{a_n}{n \in \NN}$ is $\sigma$-bounded,
  so Theorem~\ref{theorem:asfunctions} applies.
\end{proof}
The two Theorems \ref{theorem:asoperators} and \ref{theorem:asfunctions}
allow to develop a theory of $^*$\=/algebras of (possibly unbounded) operators
in a representation-independent way. While the assumption of being $\sigma$\=/bounded
might be replaced by others, e.g.\ the existence of a well-behaved locally convex
topology, the property of being radical is necessary for a commutative
ordered $^*$\=/algebra to have a faithful representation as functions, see
Proposition~\ref{proposition:functionsradical}.

\end{onehalfspace}
\end{document}